\definecolor{purpleish}{RGB}{102,0,102}
\definecolor{pinkish}{RGB}{255,153,204}
\newcommand{\area}{\mathsf{area}}
\newcommand{\dinv}{\mathsf{dinv}}
\newcommand{\dcomp}{\mathsf{dcomp}}
\newcommand{\D}{\mathsf{D}} % Dyck paths (new)
\newcommand{\W}{\mathsf{W}} % Labellings
\newcommand{\LD}{\mathsf{LD}} % labelled Dyck paths
\newcommand{\bA}{\mathbb A}
\newcommand{\cN}{\mathcal N}
\newcommand{\cM}{\mathcal M}
\newcommand{\e}{\underline{e}}
\newcommand{\PPi}{\mathbf{\Pi}}
\newdimen\qrr@tikz@sharp@z@
	\edef\pgf@marshal{\noexpand\pgfutil@in@{and}{\pgfgetarrowoptions{sharp >}}}%
	\edef\pgf@tempa{\pgfgetarrowoptions{sharp >}}
	\qrr@tikz@sharp@parse\pgfgetarrowoptions{sharp >}and-\pgfgetarrowoptions{sharp >}\@qrr@tikz@sharp@parse
	\let\qrr@tikz@sharp@max\pgfmathresult
	\pgfmathsetlength\pgf@xa{.5*\pgflinewidth * tan(\qrr@tikz@sharp@max)}%
	\edef\pgf@marshal{\noexpand\pgfutil@in@{and}{\pgfgetarrowoptions{sharp >}}}%
	\edef\pgf@tempa{\pgfgetarrowoptions{sharp >}}
	\qrr@tikz@sharp@parse\pgfgetarrowoptions{sharp >}and-\pgfgetarrowoptions{sharp >}\@qrr@tikz@sharp@parse
	\pgfmathsetlength\pgf@ya{.5*\pgflinewidth * tan(max(\pgf@tempa,\pgf@tempb,0))}%
	\pgfmathsetlength\pgf@xa{-.5*\pgflinewidth * tan(\pgf@tempa)}%
	\pgfmathsetlength\pgf@xb{-.5*\pgflinewidth * tan(\pgf@tempb)}%
	\edef\pgf@marshal{\noexpand\pgfutil@in@{and}{\pgfgetarrowoptions{sharp <}}}%
	\edef\pgf@tempa{\pgfgetarrowoptions{sharp <}}
\qrr@tikz@sharp@parse\pgfgetarrowoptions{sharp <}and-\pgfgetarrowoptions{sharp <}\@qrr@tikz@sharp@parse
	\let\qrr@tikz@sharp@max\pgfmathresult
	\pgfmathsetlength\pgf@xa{.5*\pgflinewidth * tan(\qrr@tikz@sharp@max)}%
	\edef\pgf@marshal{\noexpand\pgfutil@in@{and}{\pgfgetarrowoptions{sharp <}}}%
	\edef\pgf@tempa{\pgfgetarrowoptions{sharp <}}
\qrr@tikz@sharp@parse\pgfgetarrowoptions{sharp <}and-\pgfgetarrowoptions{sharp <}\@qrr@tikz@sharp@parse
	\pgfmathsetlength\pgf@ya{.5*\pgflinewidth * tan(max(\pgf@tempa,\pgf@tempb,0))}%
	\pgfmathsetlength\pgf@xa{-.5*\pgflinewidth * tan(\pgf@tempa)}%
	\pgfmathsetlength\pgf@xb{-.5*\pgflinewidth * tan(\pgf@tempb)}%
\def\qrr@tikz@sharp@parse#1and#2\@qrr@tikz@sharp@parse{\def\pgf@tempa{#1}\def\pgf@tempb{#2}}
\newcommand\multiset[2]%
\let\existstemp\exists \renewcommand*{\exists}{\mathop \existstemp}
\let\foralltemp\forall \renewcommand*{\forall}{\mathop \foralltemp}
\def\quotient#1#2{\raise1ex\hbox{$#1$}\Big/\lower1ex\hbox{$#2$}}
\newcommand{\<}{\langle}
\renewcommand{\>}{\rangle}
\newtheorem{theorem}{Theorem}[section]
\newtheorem{proposition}[theorem]{Proposition}
\newtheorem{corollary}[theorem]{Corollary}
\newtheorem{conjecture}[theorem]{Conjecture}
\theoremstyle{definition}
\newtheorem{definition}[theorem]{Definition}
\theoremstyle{remark}
\newtheorem{remark}[theorem]{Remark}
\title{A proof of the compositional Delta conjecture}
\author{Michele D'Adderio}
\address{Universit\'e Libre de Bruxelles (ULB)\\D\'epartement de Math\'ematique\\ Boulevard du Triomphe, B-1050 Bruxelles\\ Belgium}\email{mdadderi@ulb.ac.be}
\author{Anton Mellit}
\address{University of Vienna, Oskar-Morgenstern-Platz 1, Vienna 1090, Austria}\email{anton.mellit@univie.ac.at}
\begin{document}
	
\begin{abstract}
	We prove a compositional refinement of the Delta conjecture (rise version) of Haglund, Remmel and Wilson \cite{Haglund-Remmel-Wilson-2015} for $\Delta_{e_{n-k-1}}'e_n$ which was stated in \cite{dadderio2019theta} in terms of Theta operators. 
\end{abstract}
	
\maketitle
\tableofcontents

\section{Introduction}
%\section{Introduction}

In \cite{Haglund-Remmel-Wilson-2015} Haglund, Remmel and Wilson formulated the \emph{Delta conjecture (rise version)}, which can be stated as
\[\Delta_{e_{n-k-1}}'e_n=\sum_{P\in \LD(n)^{\ast k}} q^{\dinv(P)} t^{\area(P)}x^P, \]
where the sum is over labelled Dyck paths of size $n$ with positive labels and $k$ decorated rises (see Sections~\ref{sec:comb_defs} for definitions). It turns out that for $k=0$ this formula reduces to the shuffle conjecture, recently proved in \cite{Carlsson-Mellit-ShuffleConj-2015}: see \cite{Willigenburg_History_Shuffle} for a nice exposition of this interesting story. The Delta conjecture (rise version) already attracted quite a bit of interest, and several of its consequences have been proved: e.g.\ see \cite{Garsia-Haglund-Remmel-Yoo-2017,Romero-Deltaq1-2017,Haglund-Rhoades-Shimozono-Advances,DAdderio_Iraci_VWyngaerd_t0,DAdderio-Iraci-VandenWyngaerd-GenDeltaSchroeder,DAdderio-Iraci-VandenWyngaerd-Delta-Square}, and \cite[Section~2]{TheBible} for a short survey of partial progress on the problem.

In \cite{dadderio2019theta} a new family of operators on symmetric functions has been introduced, the so called Theta operators, which allowed the authors to conjecture a compositional refinement of the Delta conjecture, which can be stated as
\begin{equation} \label{eq:intro_compDelta}
(-1)^{|\alpha|}\Theta_{e_k}\nabla C_{\alpha}=\mathop{\sum_{P\in \LD(n)^{\ast k}}}_{\dcomp(P)=\alpha}q^{\dinv(P)}t^{\area(P)} x^P,
\end{equation}	
where $\dcomp(P)$ is the diagonal composition determined by the points where the Dyck path of $P$ touches the main diagonal and the positions of the decorated rises (see Sections~\ref{sec:comb_defs} for definitions).

This conjecture at $k=0$ gives the compositional shuffle conjecture stated in \cite{Haglund-Morse-Zabrocki-2012}, which is precisely what has been proved in \cite{Carlsson-Mellit-ShuffleConj-2015}.

In this work we prove \eqref{eq:intro_compDelta}, getting the Delta conjecture as an immediate corollary.
\begin{remark}
In \cite{Haglund-Remmel-Wilson-2015} there is also a \emph{valley version} of the Delta conjecture, which is left open.
\end{remark}

\medskip

The rest of this paper is organized in the following way. In Sections~2 and 3 we introduce the notions and tools needed to state in Section~3 the compositional Delta conjecture. In Section~4 we recall some definitions about the Dyck path algebra introduced in \cite{Carlsson-Mellit-ShuffleConj-2015} and how the Delta conjecture has been reduced in \cite{dadderio2019theta} to an identity of operators on symmetric functions. In Section~5 we finally prove this operator identity.

\section*{Acknowledgements}
The authors are grateful to Erik Carlsson, Adriano Garsia, Jim Haglund, Sasha Iraci, Marino Romero and Anna Vanden Wyngaerd for interesting discussions. In particular we find Adriano Garsia's continuous encouragement invaluable for the whole subject.

The first author's work is supported by the Fonds Thelam project J1150080.

The second author's work is supported by the projects Y963-N35 and
P-31705 of the Austrian Science Fund.

\section{Symmetric functions} \label{sec:SF}

In this section we limit ourselves to introduce the necessary notation to state our main theorem. 

The main references that we will use for symmetric functions
are \cite{Macdonald-Book-1995}, \cite{Stanley-Book-1999} and \cite{Haglund-Book-2008}. 

\medskip

The standard bases of the symmetric functions that will appear in our
calculations are the complete $\{h_{\lambda}\}_{\lambda}$, elementary $\{e_{\lambda}\}_{\lambda}$, power $\{p_{\lambda}\}_{\lambda}$ and Schur $\{s_{\lambda}\}_{\lambda}$ bases.

\medskip

\emph{We will use the usual convention that $e_0=h_0=1$ and $e_k=h_k=0$ for $k<0$.}

\medskip

The ring $\Lambda$ of symmetric functions can be thought of as the polynomial ring in the power
sum generators $p_1, p_2, p_3,\dots$. This ring has a grading $\Lambda=\bigoplus_{n\geq 0}\Lambda^{(n)}$ given by assigning degree $i$ to $p_i$ for all $i\geq 1$. As we are working with Macdonald symmetric functions
involving two parameters $q$ and $t$, we will consider this polynomial ring over the field $\mathbb{Q}(q,t)$.
We will make extensive use of the \emph{plethystic notation}.

With this notation we will be able to add and subtract alphabets, which will be represented as sums of monomials $X = x_1 + x_2 + x_3+\cdots $. Then, given a symmetric function $f$, and thinking of it as an element of $\Lambda$, we denote by $f[X]$ the expression $f$ with $p_k$ replaced by $x_{1}^{k}+x_{2}^{k}+x_{3}^{k}+\cdots$, for all $k$. More generally, given any expression $Q(z_1,z_2,\dots)$, we define the plethystic substitution $f[Q(z_1,z_2,\dots)]$ to be $f$ with $p_k$ replaced by $Q(z_1^k,z_2^k,\dots)$.

We denote by $\<\, , \>$ the \emph{Hall scalar product} on symmetric functions, which can be defined by saying that the Schur functions form an orthonormal basis. We denote by $\omega$ the fundamental algebraic involution which sends $e_k$ to $h_k$, $s_{\lambda}$ to $s_{\lambda'}$ and $p_k$ to $(-1)^{k-1}p_k$.

With the symbol ``$\perp$'' we denote the operation of taking the adjoint of an operator with respect to the Hall scalar product, i.e.
\begin{equation}
\langle f^\perp g,h\rangle=\langle g,fh\rangle\quad \text{ for all }f,g,h\in \Lambda.
\end{equation}

For a partition $\mu\vdash n$, we denote by
\begin{equation}
\widetilde{H}_{\mu} \coloneqq \widetilde{H}_{\mu}[X]=\widetilde{H}_{\mu}[X;q,t]=\sum_{\lambda\vdash n}\widetilde{K}_{\lambda \mu}(q,t)s_{\lambda}
\end{equation}
the \emph{(modified) Macdonald polynomials}, where 
\begin{equation}
\widetilde{K}_{\lambda \mu} \coloneqq \widetilde{K}_{\lambda \mu}(q,t)=K_{\lambda \mu}(q,1/t)t^{n(\mu)}\quad \text{ with }\quad n(\mu)=\sum_{i\geq 1}\mu_i(i-1)
\end{equation}
are the \emph{(modified) Kostka coefficients} (see \cite[Chapter~2]{Haglund-Book-2008} for more details). 

The set $\{\widetilde{H}_{\mu}[X;q,t]\}_{\mu}$ is a basis of the ring of symmetric functions $\Lambda$ with coefficients in $\mathbb{Q}(q,t)$. This is a modification of the basis introduced by Macdonald \cite{Macdonald-Book-1995}, and they are the Frobenius characteristic of the so called Garsia-Haiman modules (see \cite{Garsia-Haiman-PNAS-1993}).

If we identify the partition $\mu$ with its Ferrers diagram, i.e. with the collection of cells $\{(i,j)\mid 1\leq i\leq \mu_j, 1\leq j\leq \ell(\mu)\}$, then for each cell $c\in \mu$ we refer to the \emph{arm}, \emph{leg}, \emph{co-arm} and \emph{co-leg} (denoted respectively as $a_\mu(c), l_\mu(c), a_\mu(c)', l_\mu(c)'$) as the number of cells in $\mu$ that are strictly to the right, above, to the left and below $c$ in $\mu$, respectively (see Figure~\ref{fig:notation}).

\begin{figure}[h]
	\centering
	\begin{tikzpicture}[scale=.4]
	\draw[gray,opacity=.4](0,0) grid (15,10);
	\fill[white] (1,10)|-(3,9)|- (5,7)|-(9,5)|-(13,2)--(15.2,2)|-(1,10.2);
	\draw[gray]  (1,10)|-(3,9)|- (5,7)|-(9,5)|-(13,2)--(15,2)--(15,0)-|(0,10)--(1,10);
	\fill[blue, opacity=.2] (0,3) rectangle (9,4) (4,0) rectangle (5,7); 
	\fill[blue, opacity=.5] (4,3) rectangle (5,4);
	\draw (7,4.5) node {\tiny{Arm}} (3.25,5.5) node {\tiny{Leg}} (6.25, 1.5) node {\tiny{Co-leg}} (2,2.5) node {\tiny{Co-arm}} ;
	\end{tikzpicture}
	\caption{}
	\label{fig:notation}
\end{figure}

We set
\begin{equation}
M \coloneqq (1-q)(1-t),
\end{equation}
and we define for every partition $\mu$
\begin{align}
B_{\mu} &  \coloneqq B_{\mu}(q,t)=\sum_{c\in \mu}q^{a_{\mu}'(c)}t^{l_{\mu}'(c)}\\
T_{\mu} &  \coloneqq T_{\mu}(q,t)=\prod_{c\in \mu}q^{a_{\mu}'(c)}t^{l_{\mu}'(c)}\\
\Pi_{\mu} &  \coloneqq \Pi_{\mu}(q,t)=\prod_{c\in \mu/(1)}(1-q^{a_{\mu}'(c)}t^{l_{\mu}'(c)})\quad (\mu\neq\varnothing).
\end{align}

Notice that
\begin{equation} \label{eq:Bmu_Tmu}
B_{\mu}=e_1[B_{\mu}]\quad \text{ and } \quad T_{\mu}=e_{|\mu|}[B_{\mu}].
\end{equation}

For every symmetric function $f[X]$ we set
\begin{equation}
f^*=f^*[X] \coloneqq f\left[\frac{X}{M}\right].
\end{equation}

The following linear operators were introduced in \cites{Bergeron-Garsia-ScienceFiction-1999,Bergeron-Garsia-Haiman-Tesler-Positivity-1999}, and they are at the basis of the conjectures relating symmetric function coefficients and $q,t$-combinatorics in this area. 

We define the \emph{nabla} operator on $\Lambda$ by
\begin{equation}
\nabla  \widetilde{H}_{\mu}=(-1)^{|\mu|}T_{\mu} \widetilde{H}_{\mu}\quad \text{ for all }\mu.
\end{equation}
Notice that traditionally there is no sign in the definition of nabla, but we follow here the convention in \cite{Garsia-Mellit-FiveTerm-2019}, as it makes it easier to state and use some results in that reference.

We define the \emph{Delta} operators $\Delta_f$ and $\Delta_f'$ on $\Lambda$ by
\begin{equation}
\Delta_f \widetilde{H}_{\mu}=f[B_{\mu}(q,t)]\widetilde{H}_{\mu}\quad \text{ and } \quad 
\Delta_f' \widetilde{H}_{\mu}=f[B_{\mu}(q,t)-1]\widetilde{H}_{\mu},\quad \text{ for all }\mu.
\end{equation}

Observe that on the vector space of symmetric functions homogeneous of degree $n$, denoted by $\Lambda^{(n)}$, the operator $\nabla$ equals $(-1)^n\Delta_{e_n}$. Moreover, for every $1\leq k\leq n$,
\begin{equation} \label{eq:deltaprime}
\Delta_{e_k}=\Delta_{e_k}'+\Delta_{e_{k-1}}'\quad \text{ on }\Lambda^{(n)},
\end{equation}
and for any $k>n$, $\Delta_{e_k}=\Delta_{e_{k-1}}'=0$ on $\Lambda^{(n)}$, so that $\Delta_{e_n}=\Delta_{e_{n-1}}'$ on $\Lambda^{(n)}$.

\medskip

In \cite{Haglund-Morse-Zabrocki-2012} the following operators were introduced: for any $m\geq 0$ and any $F[X]\in \Lambda$
\begin{equation}
\mathbb{C}_mF[X] \coloneqq (-1/q)^{m-1}\sum_{r\geq 0}q^{-r}h_{m+r}[X]h_{r}[X(1-q)]^\perp F[X],
\end{equation}
and for any composition $\alpha=(\alpha_1,\alpha_2,\dots,\alpha_l)$ of $n$, denoted $\alpha\vDash n$, we set
\begin{equation}
C_\alpha=C_\alpha[X;q] \coloneqq \mathbb{C}_{\alpha_1}\mathbb{C}_{\alpha_2}\cdots \mathbb{C}_{\alpha_l}(1).
\end{equation}

The symmetric functions $E_{n,k}$ were introduced in \cite{Garsia-Haglund-qtCatalan-2002} by means of the following expansion:
\begin{equation} \label{eq:def_Enk}
e_n\left[X\frac{1-z}{1-q}\right]=\sum_{k=1}^n \frac{(z;q)_k}{(q;q)_k}E_{n,k},
\end{equation}
where
\begin{equation}
(a;q)_n \coloneqq (1-a)(1-qa)(1-q^2a)\cdots (1-q^{n-1}a)
\end{equation}
is the standard notation for the $q$-\emph{rising factorial}.

Notice that setting $z=q$ we get
\begin{equation} \label{eq:en_sum_Enk}
e_n=E_{n,1}+E_{n,2}+\cdots +E_{n,n}.
\end{equation}

The following identity is proved in \cite{Haglund-Morse-Zabrocki-2012}*{Section~5}:
\begin{equation} \label{eq:Enk=sumCalpha}
E_{n,r}=\mathop{\sum_{\alpha\vDash n}}_{\ell(\alpha)=r}C_\alpha\quad \text{ for all }r=1,2,\dots, n,
\end{equation}
where $\ell(\alpha)$ denotes the length of the composition $\alpha$.

Together with \eqref{eq:en_sum_Enk} it gives immediately
\begin{equation} \label{eq:en_sum_Calpha}
e_n=\sum_{\alpha\vDash n} C_\alpha.
\end{equation}

\medskip

Recall the definition of the invertible linear operator $\mathbf{\Pi}$ on $\Lambda=\oplus_{n\geq 1}\Lambda^{(n)}$ defined as follows: for any non-empty partition $\mu$
\begin{equation}
\mathbf{\Pi} \widetilde{H}_\mu \coloneqq \Pi_\mu \widetilde{H}_\mu .
\end{equation}

For any symmetric function $f\in \Lambda^{(k)}$ we introduce the following \emph{Theta operators} on $\Lambda$: for every $F \in \Lambda^{(n)}$ we set
\begin{equation} \label{eq:def_Deltaf}
\Theta_fF  \coloneqq 
\left\{\begin{array}{ll}
\mathbf{\Pi}f^*\mathbf{\Pi}^{-1}F  & \text{if } n\geq 1\\
0  & \text{if } n=0 \text{ and } k\geq 1 \\
f\cdot F  & \text{if } n=0 \text{ and } k=0
\end{array} \right.
\end{equation}
It is clear that $\Theta_f$ is linear, and moreover, if $f$ is homogenous of degree $k$, then so is $\Theta_f$, i.e. \[\Theta_f\Lambda^{(n)}\subseteq \Lambda^{(n+k)} \qquad \text{ for }f\in \Lambda^{(k)}. \]

The following theorem is proved in \cite[Theorem~3.1]{dadderio2019theta}.
\begin{theorem} \label{thm:DeltakmGD}
	For $n\geq 1$ and $k\geq 0$,
	\begin{equation} \label{eq:thm:DeltakmGD}
	(-1)^{n-k}\Theta_{e_k}\nabla e_{n-k}=\Delta_{e_{n-k-1}}'e_n.
	\end{equation}
\end{theorem}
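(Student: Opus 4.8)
The plan is to prove \eqref{eq:thm:DeltakmGD} by expanding both sides in the modified Macdonald basis $\{\widetilde{H}_\mu\}_{\mu\vdash n}$ and comparing coefficients, using that $\nabla$, $\Delta_f$, $\Delta_f'$ and $\mathbf{\Pi}$ all act diagonally in this basis, while multiplication by $e_k^{*}=e_k[X/M]$ is controlled by a Pieri-type rule. Both sides lie in $\Lambda^{(n)}$ (since $\Theta_{e_k}$ maps $\Lambda^{(n-k)}$ into $\Lambda^{(n)}$), and the degenerate case $n=k$ is immediate: there $\Theta_{e_k}$ annihilates its degree-$0$ input while the right-hand side is $\Delta_{e_{-1}}'e_n=0$; so assume $n>k$, where $\mathbf{\Pi}$ is invertible on all the graded pieces involved. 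The symmetric-function input is the classical expansion $e_m=\sum_{\mu\vdash m}\frac{MB_\mu\Pi_\mu}{w_\mu}\widetilde{H}_\mu$ (see \cite{Haglund-Book-2008}), with $w_\mu=\prod_{c\in\mu}(q^{a_\mu(c)}-t^{l_\mu(c)+1})(t^{l_\mu(c)}-q^{a_\mu(c)+1})$ the usual Macdonald norm. Since $\Delta_{e_{n-k-1}}'$ is diagonal with eigenvalue $e_{n-k-1}[B_\mu-1]$ and $B_\mu-1=\sum_{c\in\mu/(1)}q^{a'_\mu(c)}t^{l'_\mu(c)}$, the coefficient of $\widetilde{H}_\mu$ in the right-hand side is simply $\frac{MB_\mu\Pi_\mu}{w_\mu}\,e_{n-k-1}\!\big[\sum_{c\in\mu/(1)}q^{a'_\mu(c)}t^{l'_\mu(c)}\big]$.

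For the left-hand side, diagonality of $\nabla$ gives $\nabla e_{n-k}=(-1)^{n-k}\sum_{\nu\vdash n-k}\frac{MB_\nu\Pi_\nu T_\nu}{w_\nu}\widetilde{H}_\nu$, and applying $\mathbf{\Pi}^{-1}$ (eigenvalue $\Pi_\nu^{-1}$) cancels the factor $\Pi_\nu$, leaving $\mathbf{\Pi}^{-1}\nabla e_{n-k}=(-1)^{n-k}\sum_{\nu\vdash n-k}\frac{MB_\nu T_\nu}{w_\nu}\widetilde{H}_\nu$. One then multiplies by $e_k^{*}=e_k[X/M]$, which requires the expansion of this product in the modified basis, $e_k[X/M]\,\widetilde{H}_\nu=\sum_{\mu\supseteq\nu,\,|\mu/\nu|=k}c_{\mu\nu}(q,t)\,\widetilde{H}_\mu$, the Macdonald--Pieri coefficients $c_{\mu\nu}$ being explicit products of $(q,t)$-factors attached to the cells of $\mu/\nu$ and the rows and columns they meet. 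Finally $\mathbf{\Pi}$ multiplies the $\widetilde{H}_\mu$-term by $\Pi_\mu$. Collecting (and noting that the two factors $(-1)^{n-k}$ cancel), the coefficient of $\widetilde{H}_\mu$ in $(-1)^{n-k}\Theta_{e_k}\nabla e_{n-k}$ is $\Pi_\mu\sum_{\nu\subseteq\mu,\,|\nu|=n-k}\frac{MB_\nu T_\nu}{w_\nu}\,c_{\mu\nu}$.

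Comparing the two coefficient formulas, the theorem reduces to the family of identities of rational functions in $q,t$, one for each $\mu\vdash n$:
\[
\sum_{\nu\subseteq\mu,\ |\nu|=n-k}\frac{B_\nu T_\nu}{w_\nu}\,c_{\mu\nu}\;=\;\frac{B_\mu}{w_\mu}\,e_{n-k-1}\!\Big[\,\sum_{c\in\mu/(1)}q^{a'_\mu(c)}t^{l'_\mu(c)}\Big].
\]
The hard part will be establishing this identity; everything else is bookkeeping with eigenvalues. The cleanest packaging is to handle all $k$ simultaneously: multiplying by $z^{n-|\nu|}$ and summing turns the right-hand side into $\frac{B_\mu}{w_\mu}\prod_{c\in\mu/(1)}\big(z+q^{a'_\mu(c)}t^{l'_\mu(c)}\big)$, so the claim becomes a single polynomial identity in $z$ for each fixed $\mu$. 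One can then argue by induction on the shape of $\mu$ --- peeling off, say, the rightmost column --- exploiting that $w_\mu$ and the coefficients $c_{\mu\nu}$ both factor along rows and columns, so that each step collapses to a one-variable $q$-binomial-type identity. (As a consistency check, at $k=0$ the identity is $(-1)^n\nabla e_n=\Delta_{e_n}e_n=\Delta_{e_{n-1}}'e_n$ on $\Lambda^{(n)}$, which follows from \eqref{eq:deltaprime}.) An alternative, possibly slicker, route to the displayed identity is to derive it from known operator relations between $\nabla$, the $\Delta$-operators and $\mathbf{\Pi}$ --- essentially a $\mathbf{\Pi}$-twisted form of Macdonald reciprocity --- but in either approach the displayed $q,t$-identity is the main obstacle.
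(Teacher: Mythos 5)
First, a point of context: this paper does not prove Theorem~\ref{thm:DeltakmGD} at all --- it is imported verbatim from \cite{dadderio2019theta}*{Theorem~3.1}, so the only ``proof'' present here is a citation. Judged on its own terms, your reduction is sound as bookkeeping: the expansion $e_m=\sum_{\mu\vdash m}\frac{MB_\mu\Pi_\mu}{w_\mu}\widetilde{H}_\mu$, the diagonal action of $\nabla$, $\mathbf{\Pi}^{\pm1}$ and $\Delta_{e_{n-k-1}}'$, the handling of the degenerate cases, and the $k=0$ consistency check are all correct, and the theorem is indeed equivalent to the displayed family of $q,t$-identities (or to its generating-function form in $z$).

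The genuine gap is that this family of identities \emph{is} the theorem, and you do not prove it. Two concrete problems. First, the coefficients $c_{\mu\nu}$ in $e_k[X/M]\,\widetilde{H}_\nu=\sum_{\mu}c_{\mu\nu}\widetilde{H}_\mu$ are not, for $k>1$, ``explicit products of $(q,t)$-factors attached to the cells of $\mu/\nu$'': iterating the one-box Pieri rule of Garsia--Haiman--Tesler yields a sum over chains $\nu=\nu^{(0)}\subset\nu^{(1)}\subset\cdots\subset\nu^{(k)}=\mu$ that does not telescope to a product, so the proposed induction (``peel off the rightmost column, collapse to a one-variable $q$-binomial identity'') has no factored structure to run on and is not a viable outline as stated. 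Second, even granting usable formulas for $c_{\mu\nu}$, the identity $\sum_{\nu\subseteq\mu,\,|\nu|=n-k}\frac{B_\nu T_\nu}{w_\nu}c_{\mu\nu}=\frac{B_\mu}{w_\mu}e_{n-k-1}[B_\mu-1]$ is a nontrivial Macdonald summation formula of the same depth as the statement being proved; in \cite{dadderio2019theta} the corresponding step is carried out by invoking established symmetric-function summation identities rather than by a direct shape induction. As written, your argument reduces the theorem to an unproved assertion, explicitly labels it ``the hard part,'' and gestures at two strategies without executing either. That is a plausible plan for where a proof could live, not a proof; to close it you must either prove the displayed identity or cite a precise reference that does.
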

The following corollary immediately follows from Theorem~\ref{thm:DeltakmGD} and \eqref{eq:en_sum_Calpha}.

\begin{corollary} \label{cor:sumTouchingGD}
	For $n\geq 1$ and $k\geq 0$,
	\begin{equation} \label{eq:DeltaCalpha}
	(-1)^{n-k}\sum_{\alpha\vDash n-k}\Theta_{e_k}\nabla C_\alpha = \Delta_{e_{n-k-1}}'e_n. 
	\end{equation}
\end{corollary}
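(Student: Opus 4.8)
The plan is to obtain this as a purely formal consequence of Theorem~\ref{thm:DeltakmGD} together with the expansion \eqref{eq:en_sum_Calpha}. First I would apply \eqref{eq:en_sum_Calpha} with $n$ replaced by $n-k$ (which is legitimate for every $n-k\geq 0$, using the convention that the empty composition is the unique composition of $0$ and that $C_\varnothing=1=e_0$), obtaining $e_{n-k}=\sum_{\alpha\vDash n-k}C_\alpha$.

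Next I would invoke the linearity of both $\nabla$ and $\Theta_{e_k}$ as operators on $\Lambda$: the operator $\nabla$ is defined on the Macdonald basis $\{\widetilde{H}_{\mu}\}_{\mu}$ and extended by linearity, while $\Theta_{e_k}$ is linear by the remark following \eqref{eq:def_Deltaf}. Hence both operators commute with the finite sum over the compositions $\alpha\vDash n-k$, so that
\[
(-1)^{n-k}\Theta_{e_k}\nabla e_{n-k}=(-1)^{n-k}\sum_{\alpha\vDash n-k}\Theta_{e_k}\nabla C_\alpha .
\]
Applying Theorem~\ref{thm:DeltakmGD} to the left-hand side rewrites it as $\Delta_{e_{n-k-1}}'e_n$, which is precisely \eqref{eq:DeltaCalpha}.

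There is no genuine obstacle here, since the corollary is a one-line substitution into a known operator identity; the only thing worth checking is the boundary behaviour. For $k=0$ the operator $\Theta_{e_0}$ is the identity on $\Lambda^{(n)}$ with $n\geq 1$ (because $e_0^{*}=1$ and $\mathbf{\Pi}\mathbf{\Pi}^{-1}=\mathrm{id}$), and the statement collapses to $(-1)^{n}\nabla e_n=\Delta_{e_{n-1}}'e_n$ distributed over the compositions of $n$. For $n=k$ the indexing set on the left of \eqref{eq:DeltaCalpha} consists of the single empty composition and both sides vanish: the right-hand side because $e_{-1}=0$ forces $\Delta_{e_{-1}}'=0$, and the left-hand side because $\Theta_{e_n}\nabla C_\varnothing=\Theta_{e_n}(1)=0$ by the ``$n=0$ and $k\geq 1$'' clause of \eqref{eq:def_Deltaf}; so the two conventions are compatible.
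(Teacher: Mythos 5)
Your proof is correct and matches the paper's argument exactly: the paper derives this corollary ``immediately'' from Theorem~\ref{thm:DeltakmGD} and the expansion $e_{n-k}=\sum_{\alpha\vDash n-k}C_\alpha$ of \eqref{eq:en_sum_Calpha}, using linearity just as you do. Your additional check of the boundary cases $k=0$ and $n=k$ is sound but not something the paper spells out.
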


\section{Combinatorial definitions} \label{sec:comb_defs}

\begin{definition}
	A \emph{Dyck path} of size $n$ is a lattice paths going from $(0,0)$ to $(n,n)$ consisting of east or north unit steps, always staying weakly above the line $x=y$ called the \emph{main diagonal}. The set of Dyck paths is denoted by $\D(n)$.
\end{definition}

\begin{definition}
	A \emph{labelling} or \emph{word} of a Dyck path $\pi$ of size $n$ ending east  is an element $w\in \mathbb N^{n}$ such that when we label the $i$-th vertical step of $\pi$ with $w_i$ the labels appearing in each column of $\pi$ are strictly increasing from bottom to top (cf.\ Figure~\ref{fig:pldExample0}). The set of such labellings is denoted by $\W(\pi)$. 
	
	A \emph{labelled Dyck path} is an element $P=(\pi, w)$ of 
	\begin{align*}
	&\LD(n)\coloneqq \{(\pi, w) \mid \pi \in \D(n), w \in \W(\pi) \}.
	\end{align*}
\end{definition}

\begin{figure*}[!ht]
	\centering
	\begin{tikzpicture}[scale = .6]
	
	\draw[step=1.0, gray!60, thin] (0,0) grid (8,8);
	
	\draw[gray!60, thin] (0,0) -- (8,8);
	
	\draw[blue!60, line width=2pt] (0,0) -- (0,1) -- (0,2) -- (1,2) -- (2,2) -- (2,3) -- (2,4) -- (2,5) -- (3,5) -- (4,5) -- (4,6) -- (4,7) -- (4,8) -- (5,8) -- (6,8) -- (7,8) -- (8,8);
	
	\draw (0.5,0.5) circle (0.4 cm) node {$2$};
	\draw (0.5,1.5) circle (0.4 cm) node {$3$};
	\draw (2.5,2.5) circle (0.4 cm) node {$1$};
	\draw (2.5,3.5) circle (0.4 cm) node {$4$};
	\draw (2.5,4.5) circle (0.4 cm) node {$6$};
	\draw (4.5,5.5) circle (0.4 cm) node {$1$};
	\draw (4.5,6.5) circle (0.4 cm) node {$2$};
	\draw (4.5,7.5) circle (0.4 cm) node {$6$};
	\end{tikzpicture}
	\caption{Example of an element in $\LD(8)$ with reading word $21341626$.}
	\label{fig:pldExample0}
\end{figure*}
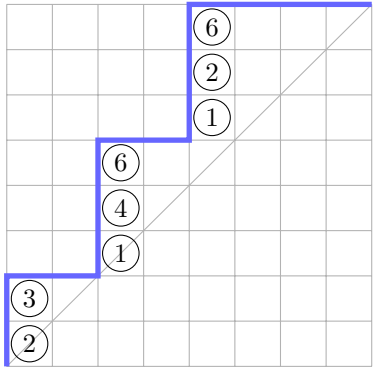

\begin{definition}
	Let $\pi$ be a Dyck path of size $n$. We define its \emph{area word} to be the sequence of integers $a(\pi) = (a_1(\pi),a_2(\pi), \cdots, a_{n}(\pi))$ such that the $i$-th vertical step of the path starts from the diagonal $y=x+a_i(\pi)$. For example the path in Figure~\ref{fig:pldExample0} has area word $(0, \, 1, \, 0, \, 1, \, 2, \, 1, \, 2, \, 3)$.
\end{definition}

\begin{definition}\label{def: monomial path}
	We define for each $P\in \LD(n)$ a monomial in the variables $x_1,x_2,\dots$: we set \[ x^P \coloneqq \prod_{i=1}^{n} x_{l_i(P)} \] where $l_i(P)$ is the label of the $i$-th vertical step of $P$ (the first being at the bottom). 
\end{definition}

\begin{definition}\label{def: rise}
	The \emph{rises} of a Dyck path $\pi$ are the indices \[ r(\pi) \coloneqq \{2\leq i \leq n\mid a_{i}(\pi)>a_{i-1}(\pi)\},\] or the vertical steps that are directly preceded by another vertical step. 
	
	A \emph{decorated Dyck path} is a pair $P=(\pi, dr)$ where $\pi$ is a Dyck path and $dr\subseteq r(\pi)$. We set 
	\[\D(n)^{\ast k} \coloneqq \{(\pi, dr)\mid \pi\in \D(n), dr\subseteq r(\pi), \vert dr\vert= k\}.\]
	A \emph{labelled decorated Dyck path} is a triple $(\pi, dr, w)$ where $\pi$ is a Dyck path, $dr\subseteq r(\pi)$ and $w$ is a labelling of $\pi$. We set 
	\[\LD(n)^{\ast k}\coloneqq \{(\pi, dr, w)\mid (\pi,w)\in \LD(n), dr\subseteq r(\pi), \vert dr\vert= k \}\]
	We will sometimes use the natural identification $\LD(n)^{\ast 0}= \LD(n)$.
\end{definition}

\begin{definition}\label{def: reading word}
	Given a labelled Dyck path $P$, we define its \emph{reading word} $\sigma(P)$ as the sequence of \emph{nonzero} labels, read starting from the main diagonal $y=x$ going bottom left to top right, then moving to the next diagonal, $y=x+1$ again going bottom left to top right, and so on.
	
	If the reading word of $P$ is $r_1\dots r_n$ then the  \emph{reverse reading word} of $P$ is $r_n\dots r_1$. 
\end{definition}
See Figure~\ref{fig:pldExample0} and Figure~\ref{fig:pldExample1} for an example.

\begin{figure*}[!ht]
	\centering
	\begin{tikzpicture}[scale = .6]
	
	\draw[step=1.0, gray!60, thin] (0,0) grid (8,8);
	
	\draw[gray!60, thin] (0,0) -- (8,8);
	
	\draw[blue!60, line width=2pt] (0,0) -- (0,1) -- (0,2) -- (1,2) -- (2,2) -- (2,3) -- (2,4) -- (2,5) -- (3,5) -- (4,5) -- (4,6) -- (4,7) -- (4,8) -- (5,8) -- (6,8) -- (7,8) -- (8,8);
	
	\draw (0.5,0.5) circle (0.4 cm) node {$2$};
	\draw (0.5,1.5) circle (0.4 cm) node {$3$};
	\draw (2.5,2.5) circle (0.4 cm) node {$1$};
	\draw (2.5,3.5) circle (0.4 cm) node {$4$};
	\draw (2.5,4.5) circle (0.4 cm) node {$6$};
	\draw (4.5,5.5) circle (0.4 cm) node {$1$};
	\draw (4.5,6.5) circle (0.4 cm) node {$2$};
	\draw (4.5,7.5) circle (0.4 cm) node {$6$};
	
	\node at (1.5,3.5) {$\ast$};
	\node at (3.5,6.5) {$\ast$};
	
	\end{tikzpicture}
	\caption{Example of an element in $\LD(6)^{\ast 2}$ with reading word $21314626$.}
	\label{fig:pldExample1}
\end{figure*}

We define two statistics on this set that reduce to the same statistics as defined in \cite{HHLRU-2005} when $k=0$. 

\begin{definition}
	\label{def:sqarea}
	Let $P=(\pi, dr) \in \D(n)^{\ast k}$. Define 
	\[
	\area(P) \coloneqq \sum_{i\not \in dr} a_i(\pi) .
	\] More visually, the area is the number of whole squares between the path and the main diagonal and not contained in rows containing a decorated rise. 
	
	If $P=(\pi, dr,w)\in \LD(n)^{\ast k}$ then we set $\area(P)=\area((\pi, dr))$. In other words, the area of a path does not depend on its labelling. 
\end{definition}

For example, the path in Figure~\ref{fig:pldExample1} has area $6$. 

\begin{definition} \label{def: dinv SQ}
	Let $P=(\pi, dr, w) \in \LD(n)^{\ast k}$. For $1 \leq i < j \leq n$, we say that the pair $(i,j)$ is an \emph{inversion} if
	\begin{itemize}
		\item either $a_i(\pi) = a_j(\pi)$ and $w_i < w_j$ (\emph{primary inversion}),
		\item or $a_i(\pi) = a_j(\pi) + 1$ and $w_i > w_j$ (\emph{secondary inversion}),
	\end{itemize}
	where $w_i$ denotes the $i$-th letter of $w$, i.e. the label of the vertical step in the $i$-th row.
	
	Then we define 
	\begin{align*}
	\dinv(P) & \coloneqq \# \{ 0\leq i < j \leq n \mid (i,j) \; \text{is an inversion}\}.
	\end{align*} 
\end{definition}

For example, the path in Figure~\ref{fig:pldExample1} has dinv $3$: $1$ primary inversion, i.e. $(2,4)$, and $2$ secondary inversions, i.e. $(2,3)$ and $(5,6)$.

\begin{definition}
	Given $P\in \LD(n)^{\ast k}$ we define its \emph{diagonal composition} $\dcomp(P)$ to be the composition of $n-k$ whose $i$-th part is the number of rows of $P$ without a decoration $\ast$ that lie between the $i$-th and the $(i+1)$-th vertical step of $P$ on the main diagonal (or from the $i$-th step onwards if it is the last such step). See Figure~\ref{fig:composition} for an example. If $P\in D(n)^{\ast k}$, its diagonal composition is defined identically.
\end{definition}

\begin{figure}[!ht]
	\centering
	\begin{tikzpicture}[scale = 0.6]
	\draw[gray!60, thin] (0,0) grid (12,12);
	\draw[gray!60, thin] (0,0) -- (12,12);
	
	\draw[blue!60, line width=1.6pt] (0,0) -- (0,1) -- (0,2) -- (1,2) -- (2,2) -- (2,3) -- (3,3) -- (3,4) -- (4,4) -- (4,5) -- (4,6) -- (4,7) -- (5,7) -- (6,7) -- (6,8) -- (7,8) -- (8,8) -- (8,9) -- (9,9) -- (9,10) -- (9,11) -- (10,11) -- (10,12) -- (11,12) -- (12,12);
	
	\draw
	(0.5,0.5) circle(0.4 cm) node {$2$}
	(0.5,1.5) circle(0.4 cm) node {$6$}
	(2.5,2.5) circle(0.4 cm) node {$3$}
	(3.5,3.5) circle(0.4 cm) node {$7$}
	(4.5,4.5) circle(0.4 cm) node {$2$}
	(4.5,5.5) circle(0.4 cm) node {$1$}
	(4.5,6.5) circle(0.4 cm) node {$4$}
	(6.5,7.5) circle(0.4 cm) node {$1$}
	(8.5,8.5) circle(0.4 cm) node {$8$}
	(9.5,9.5) circle(0.4 cm) node {$3$}
	(9.5,10.5) circle(0.4 cm) node {$5$}
	(10.5,11.5) circle(0.4 cm) node {$9$};
	
	\node at (-0.5,1.5) {$\ast$};
	\node at (3.5,5.5) {$\ast$};
	\end{tikzpicture}
	\caption{A partially labelled Dyck path with diagonal composition $\alpha = (1,1,1,3,1,3)$.}
	\label{fig:composition}
\end{figure}
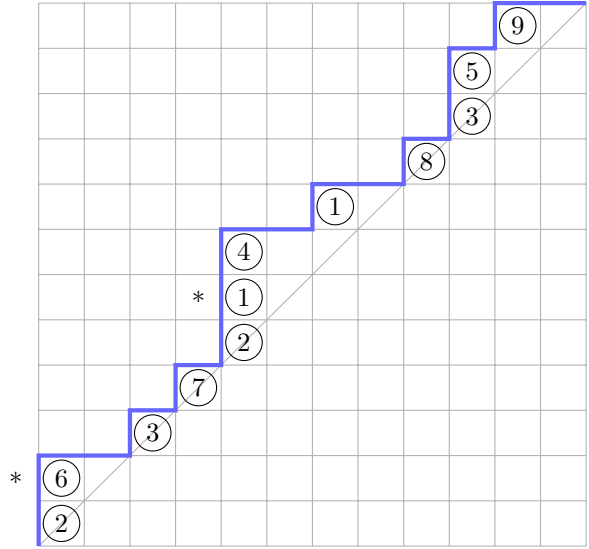

\section{Statements of Delta conjectures} \label{sec:statms}

In this section we state our refined conjectures. 

\medskip

The following conjecture is due to Haglund, Remmel and Wilson \cite{Haglund-Remmel-Wilson-2015}.

\begin{conjecture}[Delta (rise version)] \label{conj:GenDelta}
	Given $n,k\in \mathbb{N}$ with $n>k\geq 0$,
	\begin{equation}
	\Delta_{e_{n-k-1}}'e_n= \sum_{P\in \mathsf{LD}(n)^{\ast k}} q^{\dinv(P)}t^{\area(P)} x^P.
	\end{equation}	
\end{conjecture}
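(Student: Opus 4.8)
The plan is to derive Conjecture~\ref{conj:GenDelta} from the finer \emph{compositional} identity~\eqref{eq:intro_compDelta} and then to prove the latter one composition at a time. The reduction is purely combinatorial: every $P\in\LD(n)^{\ast k}$ has a well-defined diagonal composition $\dcomp(P)\vDash n-k$, since the underlying Dyck path touches the main diagonal in a well-defined set of points and a decorated rise $i$ has $a_i(\pi)>a_{i-1}(\pi)\geq 0$, hence is never a diagonal row; so each diagonal row is undecorated and is counted, the parts are genuine positive integers, and they sum to $n-k$. Therefore the fibers $\{P\in\LD(n)^{\ast k}\mid\dcomp(P)=\alpha\}$, as $\alpha$ ranges over compositions of $n-k$, partition $\LD(n)^{\ast k}$, so
\[
\sum_{P\in\LD(n)^{\ast k}}q^{\dinv(P)}t^{\area(P)}x^P=\sum_{\alpha\vDash n-k}\ \mathop{\sum_{P\in\LD(n)^{\ast k}}}_{\dcomp(P)=\alpha}q^{\dinv(P)}t^{\area(P)}x^P.
\]
By Corollary~\ref{cor:sumTouchingGD}, i.e.\ \eqref{eq:DeltaCalpha}, the left-hand side of Conjecture~\ref{conj:GenDelta} equals $(-1)^{n-k}\sum_{\alpha\vDash n-k}\Theta_{e_k}\nabla C_\alpha$, and $|\alpha|=n-k$; so Conjecture~\ref{conj:GenDelta} follows termwise once \eqref{eq:intro_compDelta} is established for each fixed $\alpha$.

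To prove \eqref{eq:intro_compDelta} I would follow the strategy that settled its $k=0$ case, the compositional shuffle conjecture of \cite{Haglund-Morse-Zabrocki-2012}, in \cite{Carlsson-Mellit-ShuffleConj-2015}: work inside the Dyck path algebra acting on $\bigoplus_{m\geq 0}\Lambda[y_1,\dots,y_m]$ via the generators $d_+$, $d_-$ and $T_i$. On the symmetric-function side one rewrites $\Theta_{e_k}\nabla C_\alpha$ — using the $\mathbf{\Pi}$-conjugation in the definition~\eqref{eq:def_Deltaf} of $\Theta_{e_k}$ and the $\mathbb{C}_m$-operators defining $C_\alpha$ — as an explicit word in $d_+$, $d_-$, $T_i$ applied to $1$, in which the parts of $\alpha$ record the "return to the diagonal" operators and the $k$ decorations are encoded by inserting $k$ copies of one extra operator of a fixed type (a $z$-marked rise, in the spirit of the $E_{n,r}$-expansion~\eqref{eq:def_Enk} and its rewriting~\eqref{eq:Enk=sumCalpha}). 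This is the "operator identity" reduction attributed in the introduction to \cite{dadderio2019theta}, which I would take as the starting point of Section~5.

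On the combinatorial side one packages $\sum_{\dcomp(P)=\alpha}q^{\dinv(P)}t^{\area(P)}x^P$ into the family of $\mathbf{F}$-type polynomials of \cite{Carlsson-Mellit-ShuffleConj-2015}, extended to carry a set of decorated rises, and shows these satisfy a recursion mirroring the action of $d_+$ (prepend a north step), $d_-$ (prepend an east step) and $T_i$ (swap two consecutive diagonal values), together with the right initial condition. One then matches this recursion with the one satisfied by the operator word above; the only genuinely new input beyond $k=0$ is to control how the decoration-insertion operator and the $\mathbf{\Pi}$-conjugation interact with $d_+$, $d_-$, $T_i$, so that the decorated recursion collapses onto the undecorated (compositional shuffle) one, up to explicit $q$-factors. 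This is also the step I expect to be the \emph{main obstacle}: establishing the commutation relations that move the $k$ extra operators past the rest, which amounts to understanding the decoration operator as a specialization/residue of operators already present in the relevant representation of the Dyck path algebra.

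Once that algebraic identity is in hand, the remaining verifications are routine: checking base cases ($n=1$, or $\alpha$ of length $1$), confirming degrees and homogeneity ($\Theta_{e_k}$ raises degree by $k$, matching the $k$ decorations and the passage $n-k\rightsquigarrow n$), and confirming that the recursive construction of the $\mathbf{F}$-polynomials does reproduce $\dinv$, $\area$ and $x^P$ over all $P$ with $\dcomp(P)=\alpha$. Summing the resulting equality~\eqref{eq:intro_compDelta} over $\alpha\vDash n-k$ and applying the first paragraph then yields Conjecture~\ref{conj:GenDelta}, with the (non-compositional) Delta conjecture for $\Delta_{e_{n-k-1}}'e_n$ following by specializing further via \eqref{eq:thm:DeltakmGD}.
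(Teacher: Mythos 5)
Your first paragraph (partitioning $\LD(n)^{\ast k}$ by $\dcomp$ and invoking Corollary~\ref{cor:sumTouchingGD}) is exactly the paper's reduction of Conjecture~\ref{conj:GenDelta} to the compositional statement \eqref{eq:compDelta}, and your decision to pass to the Dyck path algebra is also the paper's route. Note, however, that the combinatorial half of that passage does not need to be re-derived: the identity expressing $\sum_{\dcomp(P)=\alpha}q^{\dinv(P)}t^{\area(P)}x^P$ as $d_-^\ell M_\alpha^{\ast k}$ with $M_\alpha^{\ast k}$ given by the recursions \eqref{eq:recursion_1}--\eqref{eq:recursion_a} is imported wholesale from \cite{dadderio2019theta} as Theorem~\ref{thm: comp-recursion}, so the entire remaining content of the paper is the operator identity $(-1)^{|\alpha|}\Theta_{e_k}\nabla C_\alpha=d_-^\ell M_\alpha^{\ast k}$.

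That operator identity is precisely the step you defer as ``the main obstacle,'' and your sketch of how to overcome it does not contain the idea that makes it work. You propose to encode the $k$ decorations by inserting $k$ copies of an extra operator into the word for $C_\alpha$ and to control how ``the $\mathbf{\Pi}$-conjugation interacts with $d_+,d_-,T_i$''; but $\mathbf{\Pi}$ has no usable commutation relations with the Dyck path algebra generators, which is exactly why the paper must first \emph{eliminate} it. The key new input is Proposition~\ref{prop:theta nabla}, namely $\Theta(u)\nabla=\tau_u^*\nabla\tau_u^*$ (equivalently $\Theta_{e_k}\nabla=\sum_{i=0}^k\e_i^*\nabla\e_{k-i}^*$), proved from the five-term relation of \cite{Garsia-Mellit-FiveTerm-2019}. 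This rewrites $\Theta(u)\nabla\bar\omega$ as $\tau_u^*\,\cN\,(\tau_{qtu}^*)^{-1}$, a product of operators each of which is already known to extend to $V_\ast$ with explicit commutation relations with $d_\pm$, $T_i$, $y_i$, $z_i$ (Proposition~\ref{prop:operators on Aqt}); the resulting relations \eqref{eq:commutation M 1}--\eqref{eq:commutation M 3} for $\cM$, combined with the $y_\alpha$-recursion \eqref{eq:y recursion} of \cite{Carlsson-Mellit-ShuffleConj-2015}, then verify the recursions for $M_\alpha(u)=q^{\ell-|\alpha|}\cM(y_\alpha)$ all at once as a generating function in $u$, rather than one decoration-operator at a time. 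Without Proposition~\ref{prop:theta nabla} or an equivalent device for removing $\mathbf{\Pi}$, your plan stalls exactly where you predict it will, so as written the proposal is a correct reduction plus an unproved core claim rather than a proof.
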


In \cite[Conjecture~5.4]{dadderio2019theta} it is stated a \emph{compositional} refinement of the \emph{Delta conjecture}, i.e.\ of the case $k=0$ of Conjecture~\ref{conj:GenDelta}.
\begin{conjecture}[Compositional Delta] \label{conj:compDelta}
	Given $n,k\in \mathbb{N}$, $n>k\geq 0$ and $\alpha\vDash n-k$,
	\begin{equation}\label{eq:compDelta}
	(-1)^{n-k}\Theta_{e_k}\nabla C_{\alpha}=\mathop{\sum_{P\in \LD(n)^{\ast k}}}_{\dcomp(P)=\alpha}q^{\dinv(P)}t^{\area(P)} x^P.
	\end{equation}	
\end{conjecture}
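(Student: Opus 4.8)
The plan is to reduce the combinatorial identity \eqref{eq:compDelta} to an identity of operators on symmetric functions, following the architecture of Carlsson--Mellit. Recall from Corollary~\ref{cor:sumTouchingGD} that summing \eqref{eq:compDelta} over all $\alpha\vDash n-k$ recovers the Delta conjecture, so the compositional version is the natural finer statement that one can actually prove by the $\mathbb{C}$-operator machinery; indeed the family $\{C_\alpha\}$ is exactly the image of the $\mathbb{C}_m$ operators, which have a transparent combinatorial meaning on labelled Dyck paths. First I would set up the \emph{partially labelled} (equivalently, $0$-labelled) refinement: a $\mathbb{C}_m$ operator applied to a path-generating function prepends a new north step at the diagonal, shifts the remaining path, and adjusts $\dinv$ and $\area$ in a controlled way (this is the content of the $\mathbb{C}_m$ formula, with the $(-1/q)^{m-1}$ sign and the $h_r[X(1-q)]^\perp$ term encoding which of the new cells acquire labels and which stay at $0$). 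The decorated rises are handled by a further operator — essentially a ``$d_+$''-type raising together with a plethystic substitution — that inserts a decorated rise on top of the freshly created north step; this is where the Theta operator $\Theta_{e_k}$ enters on the symmetric function side, via its definition $\mathbf{\Pi} e_k^* \mathbf{\Pi}^{-1}$.

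The key steps, in order, would be: (1) introduce the Dyck path algebra and its standard representation on $V = \bigoplus \Lambda[y_1,\dots,y_\ell]$, recall from \cite{Carlsson-Mellit-ShuffleConj-2015} the operators $d_+, d_-, T_i, z_i$ and the commutation relations, and recall (this is already cited as reduced in \cite{dadderio2019theta}) that the right-hand side of \eqref{eq:compDelta}, suitably packaged over all labellings and with the decorations tracked by an extra parameter, equals an explicit word in these operators applied to $1$; (2) show that the same word, after specialization, equals $(-1)^{n-k}\Theta_{e_k}\nabla C_\alpha$ — here $\nabla$ is realized as a conjugate of the ``rotation'' in the Dyck path algebra, $C_\alpha$ as a composite of $\mathbb{C}_{\alpha_i}$'s which themselves factor through $d_+$ and $d_-$, and $\Theta_{e_k}$ as the operator implementing the $k$ decorated rises; (3) verify the bookkeeping that the $q,t$-statistics $\dinv,\area$ and the monomial $x^P$ match the gradings carried by the algebra action, and that the decorated-rise insertion does not disturb the recursive structure (the fact that $\area$ ignores decorated rows and $\dcomp$ counts only undecorated rows at the diagonal is exactly compatible with the operator identities). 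The heart of step (2) is an identity among operators that, in the $k=0$ case, is precisely the compositional shuffle theorem of \cite{Carlsson-Mellit-ShuffleConj-2015}; the new input is threading $\Theta_{e_k}$ through the proof.

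The main obstacle, I expect, is step (2) for $k>0$: establishing the operator identity
\[
(-1)^{n-k}\,\Theta_{e_k}\nabla C_\alpha = (\text{explicit Dyck-path-algebra word})
\]
in full. One cannot simply cite Carlsson--Mellit as a black box, because $\Theta_{e_k}$ does not obviously commute past $\nabla$ or past the $\mathbb{C}_m$'s in a way that lets the $k=0$ result be applied verbatim. The likely route is to find the right intertwining relations — how $\mathbf{\Pi}$ and $\mathbf{\Pi}^{-1}$ interact with $d_+$, $d_-$, and the rotation operator — perhaps via the plethystic/Tesler-matrix identities satisfied by $\Pi_\mu$ and $T_\mu$, and then to prove the $k>0$ identity by inducting on $k$, peeling off one decorated rise at a time, with the base case $k=0$ being the Carlsson--Mellit theorem. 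A secondary technical point is making sure the $h_r[X(1-q)]^\perp$ in $\mathbb{C}_m$ and the $e_k^* = e_k[X/M]$ in $\Theta_{e_k}$ combine cleanly; managing these plethystic substitutions simultaneously is the sort of calculation that is routine in spirit but easy to get wrong in practice. Once the operator identity is in hand, transporting it back to the path side via the combinatorial interpretation of the standard representation finishes the proof.
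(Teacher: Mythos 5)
Your overall architecture matches the paper's: reduce \eqref{eq:compDelta} to an operator identity in the Dyck path algebra, using the fact (already established in \cite{dadderio2019theta}, and quoted here as Theorem~\ref{thm: comp-recursion}) that the combinatorial side equals $d_-^\ell M_\alpha^{\ast k}$ for an explicitly recursively defined element $M_\alpha^{\ast k}\in V_\ell$. You also correctly locate the real difficulty: proving $(-1)^{|\alpha|}\Theta_{e_k}\nabla C_\alpha = d_-^\ell M_\alpha^{\ast k}$ for $k>0$, where Carlsson--Mellit cannot be cited as a black box because $\Theta_{e_k}$ is defined by conjugation by $\mathbf{\Pi}$, which has no evident extension to $V_\ast$. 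But your proposal stops at naming this obstacle and gestures at ``intertwining relations for $\mathbf{\Pi}$ with $d_+$, $d_-$'' plus an induction on $k$ with base case the compositional shuffle theorem. That is precisely the step that requires a new idea, and the idea is missing.

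The paper's resolution is Proposition~\ref{prop:theta nabla}: the identity $\Theta(u)\nabla = \tau_u^*\nabla\tau_u^*$, equivalently $\Theta_{e_k}\nabla=\sum_{i=0}^k \e_i^*\nabla\e_{k-i}^*$, which is extracted from the five-term relation of \cite{Garsia-Mellit-FiveTerm-2019} by identifying $R_{k,k}=(-1)^k\nabla\e_k^*\nabla^{-1}$ and then specializing $u=1$ to produce $\mathbf{\Pi}$. This eliminates $\mathbf{\Pi}$ entirely: $\Theta(u)\nabla\bar\omega$ becomes $\tau_u^*\,\cN\,(\tau_{qtu}^*)^{-1}$, a composite of operators that are \emph{already known} to extend to $V_\ast$ with explicit commutation relations against $d_\pm$, $T_i$, $y_i$, $z_i$ (Proposition~\ref{prop:operators on Aqt}, from \cite{Carlsson-Mellit-ShuffleConj-2015} and \cite{mellit2016toric}). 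The resulting operator $\cM$ then satisfies the twisted relations \eqref{eq:commutation M 1}--\eqref{eq:commutation M 3}, and the proof concludes not by induction on $k$ but by packaging all $k$ into a generating function $M_\alpha(u)=q^{\ell-|\alpha|}\cM(y_\alpha)$ and verifying that its $u$-coefficients satisfy the recursions \eqref{eq:recursion_1}, \eqref{eq:recursion_a} in $\alpha$ (which mix $k$ and $k-1$, so a clean induction peeling off one decorated rise at a time would not decouple). Without the five-term input, your plan does not close; with it, the rest is the computation you correctly describe as routine in spirit.
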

\begin{remark}
	We observe immediately that the compositional Delta conjecture implies the Delta conjecture: just sum \eqref{eq:compDelta} over $\alpha\vDash n-k$ and use \eqref{eq:DeltaCalpha}. 
\end{remark}

The main result of this article is a proof of these conjectures.

\section{Relation to the Dyck path algebra} \label{sec:reduction}

Following \cite{Carlsson-Mellit-ShuffleConj-2015}, we now introduce the operators of the \emph{Dyck path algebra} $\mathbb{A}=\mathbb{A}_q$.

Given a polynomial $P$ depending on variables $u,v$, define the operator $\Upsilon_{uv}$ as

\begin{align*}
(\Upsilon_{uv} P)(u,v) & \coloneqq \frac{(q-1)vP(u,v) + (v-qu)P(v,u)}{v-u}%, \\ (\Upsilon^*_{uv} P)(u,v) & \coloneqq \frac{(q-1)uP(u,v) + (v-qu)P(v,u)}{v-u}.
\end{align*}

In \cite{Carlsson-Mellit-ShuffleConj-2015} this operator is called $\Delta_{uv}$, but we changed the notation in order to avoid confusion with the $\Delta_f$ operator defined on $\Lambda$.

\begin{definition}[\cite{Carlsson-Mellit-ShuffleConj-2015}*{Definition~4.2}]
	For $k \in \mathbb{N}$, define $V_k \coloneqq \Lambda[y_1, \dots, y_k]=\Lambda\otimes \mathbb{Q}[y_1,\dots,y_k]$. Let 
	\[T_i \coloneqq \Upsilon_{y_i y_{i+1}} \colon V_k \rightarrow V_k\text{ for }1 \leq i \leq k-1. \]
	
	We define the operators $d_+ \colon V_k \rightarrow V_{k+1}$ and $d_- \colon V_k \rightarrow V_{k-1}$: for $F[X]\in V_k$
	\begin{align*}
	(d_+ F)[X] & \coloneqq T_1 T_2 \cdots T_k (F[X + (q-1) y_{k+1}]) \\
	(d_- F)[X] & \coloneqq -F[X - (q-1)y_k] \sum_{i\geq 0} \left.  (-1/y_k )^{i}e_i[X]  \right|_{{y_k}^{-1}}.
	\end{align*}
\end{definition}

The following theorem is an immediate consequence of Theorem~6.22 and Equation~(41) in \cite{dadderio2019theta}.

\begin{theorem}\label{thm: comp-recursion}
	If $\alpha$ is a composition of length $\ell$, then we have 
	\begin{equation} \label{eq:thm_Dyck_path_rel}
	\mathop{\sum_{P\in \LD(n)^{\ast k}}}_{\dcomp(P)=\alpha}q^{\dinv(P)}t^{\area(P)} x^P = d_-^\ell M_\alpha^{\ast k}
	\end{equation}
	where $M_\alpha^{\ast k} \in V_\ell$ is defined by the recursive relations 
	\begin{equation} \label{eq:recursion_1}
	M_{(1)\alpha}^{\ast k} = d_+ M_\alpha^{\ast k} + \frac{1}{q-1} [d_-, d_+] M_{\alpha(1)}^{\ast k-1},
	\end{equation}
	and for $a > 1$
	\begin{equation} \label{eq:recursion_a}
	M_{(a)\alpha}^{\ast k} = \frac{t^{a-1}}{q-1} [d_-, d_+] \left( \sum_{\beta \vDash a-1} d_-^{\ell(\beta)-1} M_{\alpha \beta}^{\ast k} + \sum_{\beta \vDash a} d_-^{\ell(\beta)-1} M_{\alpha \beta}^{\ast k-1} \right) ,
	\end{equation} 
	with initial conditions $M_\varnothing^{\ast k} = \delta_{k,0}$.
\end{theorem}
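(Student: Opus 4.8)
The plan is to derive Theorem~\ref{thm: comp-recursion} directly from the two cited results of \cite{dadderio2019theta}; the content is a transcription plus a routine induction rather than any new combinatorics. Write
\[ F_\alpha^{\ast k} \coloneqq \mathop{\sum_{P\in \LD(n)^{\ast k}}}_{\dcomp(P)=\alpha}q^{\dinv(P)}t^{\area(P)} x^P \in \Lambda=V_0,\qquad n=|\alpha|+k, \]
for the left-hand side of \eqref{eq:thm_Dyck_path_rel} (its symmetry, which is what makes it lie in $V_0$, is part of what \cite{dadderio2019theta} establishes). Equation~(41) of \cite{dadderio2019theta} expresses $F_\alpha^{\ast k}$ as $d_-^{\ell(\alpha)}$ applied to a certain $V_{\ell(\alpha)}$-valued generating function, and Theorem~6.22 there records the recursion satisfied by those generating functions. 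The first step is therefore to fix a dictionary: identify the $V_\ell$-valued objects of \cite{dadderio2019theta} with the $M_\alpha^{\ast k}$ of the statement, and reconcile conventions---signs, the normalization of $d_\pm$ and of the $T_i$, the order in which the parts of a composition are listed, and the exact placement of the factors $t^{a-1}$ and $\tfrac{1}{q-1}[d_-,d_+]$.

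With the dictionary in hand I would prove $F_\alpha^{\ast k}=d_-^{\ell(\alpha)}M_\alpha^{\ast k}$ by induction on $n=|\alpha|+k$, noting that every composition/decoration datum appearing on the right of \eqref{eq:recursion_1} and \eqref{eq:recursion_a} has strictly smaller value of this quantity. The base case is $\alpha=\varnothing$: since every nonempty Dyck path has its first north step on the main diagonal, $\dcomp(P)=\varnothing$ forces $P$ empty and hence $k=0$, so $F_\varnothing^{\ast k}=\delta_{k,0}=M_\varnothing^{\ast k}$ in $V_0$, and \eqref{eq:thm_Dyck_path_rel} holds trivially since $d_-^0$ is the identity. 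For the inductive step one peels off the first diagonal block $B_1$ of $P$. If $\dcomp(P)$ starts with $1$ because $B_1$ is a single (necessarily undecorated) north step, then deleting $B_1$ leaves a path of diagonal composition $\alpha'$ with the same number $k$ of decorations, and re-attaching $B_1$ is precisely $d_+$; applying the inductive hypothesis to $M_{\alpha'}^{\ast k}$ yields the term $d_+M_{\alpha'}^{\ast k}$. In all remaining cases the topmost north step of $B_1$ is a decorated rise; deleting that rise and reorganizing the part of $B_1$ below it (which may now meet the shifted diagonal several times, its primitive sub-blocks forming a refinement $\beta$ of $a-1$ or of $a$, with area contribution $t^{a-1}$ from the undecorated rows of $B_1$ below the rise) produces the operator $\tfrac{1}{q-1}[d_-,d_+]$, lowers the decoration count by one, and moves the residual piece to the tail of the composition---this is the source of the two sums $\sum_{\beta\vDash a-1}d_-^{\ell(\beta)-1}M_{\alpha'\beta}^{\ast k}$ and $\sum_{\beta\vDash a}d_-^{\ell(\beta)-1}M_{\alpha'\beta}^{\ast k-1}$ (the case $a=1$ collapsing to the single extra term $\tfrac1{q-1}[d_-,d_+]M_{\alpha'(1)}^{\ast k-1}$ of \eqref{eq:recursion_1}). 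Since all the pieces have smaller $n$, the inductive hypothesis applies, and comparison with \eqref{eq:recursion_1}--\eqref{eq:recursion_a} gives $F_\alpha^{\ast k}=d_-^{\ell(\alpha)}M_\alpha^{\ast k}$.

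The main obstacle is not conceptual but purely a matter of bookkeeping: one has to check that the recursion of \cite[Theorem~6.22]{dadderio2019theta}, possibly phrased there in terms of different auxiliary objects and a different normalization, reassembles into exactly the displayed relations \eqref{eq:recursion_1} and \eqref{eq:recursion_a}---in particular that the $q$-weights concealed in $\tfrac{1}{q-1}[d_-,d_+]$ reproduce the change in $\dinv$ caused by removing a decorated rise from the bottom of the path, that the explicit $t^{a-1}$ reproduces the change in $\area$, and that the decoration index is tracked correctly throughout. Because \cite{dadderio2019theta} already carries out this delicate analysis, the theorem follows. (Summing the identity over all $\alpha\vDash n-k$ and invoking Corollary~\ref{cor:sumTouchingGD} is what eventually links it to Conjecture~\ref{conj:compDelta}, but that is not part of the present statement.)
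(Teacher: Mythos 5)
Your proposal is correct and takes essentially the same route as the paper: the paper offers no independent argument here, stating only that the theorem is an immediate consequence of Theorem~6.22 and Equation~(41) of \cite{dadderio2019theta}, which is exactly the reduction you carry out (your added sketch of the block-peeling induction and the bookkeeping dictionary is extra detail the paper delegates entirely to that reference). Nothing further is needed.
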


It follows immediately from Theorem~\ref{thm: comp-recursion} that the following theorem, which is the main result of the present article, is equivalent to the compositional Delta conjecture \eqref{eq:compDelta}.
\begin{theorem}  \label{thm:operatDelta}
	If $\alpha$ is a composition of length $\ell$, then
	\begin{equation}
	(-1)^{|\alpha|}\Theta_{e_k}\nabla C_\alpha = d_-^\ell M_\alpha^{\ast k},
	\end{equation}
	with $M_\alpha^{\ast k}$ defined as in Theorem~\ref{thm: comp-recursion}.
\end{theorem}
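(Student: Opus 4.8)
The plan is to prove Theorem~\ref{thm:operatDelta} by induction on the pair $(|\alpha|, \ell(\alpha))$, ordered lexicographically (or, equivalently, by a suitable induction that decreases the relevant complexity), showing that the symmetric functions $G_\alpha^{\ast k} \coloneqq (-1)^{|\alpha|}\Theta_{e_k}\nabla C_\alpha$, regarded as elements of $V_0 = \Lambda$ (or, after lifting, as elements $\widetilde M_\alpha^{\ast k} \in V_\ell$ with $d_-^\ell \widetilde M_\alpha^{\ast k} = G_\alpha^{\ast k}$), satisfy exactly the same recursion \eqref{eq:recursion_1}--\eqref{eq:recursion_a} with the same initial conditions $M_\varnothing^{\ast k} = \delta_{k,0}$ as the $M_\alpha^{\ast k}$ of Theorem~\ref{thm: comp-recursion}. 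Since that recursion determines the family $M_\alpha^{\ast k}$ uniquely, matching the recursion and base case forces $d_-^\ell M_\alpha^{\ast k} = G_\alpha^{\ast k}$, which is the claim. The base case $\alpha = \varnothing$ is immediate: $\Theta_{e_k}\nabla C_\varnothing = \Theta_{e_k}\nabla(1)$, which is $1$ if $k=0$ and $0$ if $k\geq 1$ by the definition \eqref{eq:def_Deltaf} of $\Theta_f$, matching $\delta_{k,0}$.

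The first substantive step is to produce, inside the Dyck path algebra $\mathbb{A}_q$ acting on $\bigoplus_k V_k$, a lift $\widetilde M_\alpha^{\ast k}$ of $G_\alpha^{\ast k}$ — this is where the operator calculus of \cite{Carlsson-Mellit-ShuffleConj-2015} enters. Concretely I expect to use (or re-derive) the compositional identity from \cite{Carlsson-Mellit-ShuffleConj-2015} that expresses $\nabla C_\alpha$ in terms of the operators $d_+, d_-$: in the $k=0$ case this is precisely the content of the proof of the compositional shuffle conjecture, namely that $\nabla C_\alpha = d_-^\ell \widetilde M_\alpha$ where $\widetilde M_{(1)\alpha} = d_+\widetilde M_\alpha$ and $\widetilde M_{(a)\alpha}$ for $a>1$ satisfies the $t$-weighted $[d_-,d_+]$ recursion of \eqref{eq:recursion_a} with all $\ast k$ superscripts set to $0$. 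The new ingredient for $k\geq 1$ is the Theta operator $\Theta_{e_k}$. So the technical heart is: understand how $\Theta_{e_k}$ — equivalently, conjugation of $e_k^\ast$ by the operator $\mathbf\Pi$ — interacts with $d_+$, $d_-$ and the commutator $[d_-,d_+]$. I would establish a commutation/intertwining lemma of the shape "$\Theta_{e_k}$ applied after a $d_+$ step equals $d_+$ applied after $\Theta_{e_k}$, up to a correction term involving $\Theta_{e_{k-1}}$ composed with $[d_-,d_+]$" — precisely the shape that reproduces the extra summand $\frac{1}{q-1}[d_-,d_+]M_{\alpha(1)}^{\ast k-1}$ in \eqref{eq:recursion_1} and the extra $\sum_\beta d_-^{\ell(\beta)-1}M_{\alpha\beta}^{\ast k-1}$ term in \eqref{eq:recursion_a}. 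The bookkeeping here is that a decorated rise, which in the combinatorics is what the $\ast k$ records, corresponds on the algebra side to one application of the "$\Theta$-twist," and pushing a $d_+$ past that twist splits it into a clean term plus a term with one fewer decoration and one extra $[d_-,d_+]$.

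Once the intertwining lemma is in hand, the inductive step is essentially formal: apply the known ($k=0$, proven in \cite{Carlsson-Mellit-ShuffleConj-2015}) recursion for $\nabla C_\alpha$ in $\mathbb{A}_q$, then conjugate the whole identity by $\mathbf\Pi$ (to pass from $\nabla C_\alpha$ to $\Theta_{e_k}\nabla C_\alpha$), using the intertwining lemma repeatedly to move the $\mathbf\Pi$-twist past each $d_+$ and to reconcile the sign $(-1)^{|\alpha|}$ (noting $|(1)\alpha| = |\alpha|+1$ and $|(a)\alpha| = |\alpha|+a$, and that $\Theta_{e_k}$ preserves the relevant degree so the sign conventions match). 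Comparing the resulting identity term by term with \eqref{eq:recursion_1}--\eqref{eq:recursion_a} completes the induction.

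The main obstacle I anticipate is the intertwining lemma itself: proving that $\mathbf\Pi$ (hence $\Theta_{e_k}$) conjugates $d_+$ — which is built from the $\Upsilon_{y_iy_{i+1}}$ operators and the plethystic shift $X \mapsto X + (q-1)y_{k+1}$ — into $d_+$ plus exactly the right lower-$k$ correction term. This requires an explicit description of how $\mathbf\Pi$ interacts with the plethystic shift and with the $T_i$; the operator $\mathbf\Pi$ is diagonal on the $\widetilde H_\mu$ basis with eigenvalue $\Pi_\mu = \prod_{c\in\mu/(1)}(1-q^{a'}t^{l'})$, and the shift $X\mapsto X+(q-1)y$ acts by a Pieri-type rule on that basis, so the computation reduces to a finite-rank identity among the structure constants — but verifying it is the delicate part, and the correction term arising from the "missing" cell or the boundary of the Pieri expansion is exactly what must be matched against $\frac{1}{q-1}[d_-,d_+]$ acting on the lower-decoration object. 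I expect this to be where most of Section~5's work lies, with the rest of the argument being the (still careful, but more mechanical) induction described above.
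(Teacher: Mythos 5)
Your top-level strategy --- show that $(-1)^{|\alpha|}\Theta_{e_k}\nabla C_\alpha$ admits a lift to $V_{\ell}$ satisfying the recursion \eqref{eq:recursion_1}--\eqref{eq:recursion_a} with base case $\delta_{k,0}$, then invoke uniqueness of the solution of that recursion --- is exactly the paper's, and your base case check is correct. But the entire content of the proof lives in the step you defer to an anticipated ``intertwining lemma,'' and there your plan has a genuine gap. First, $\mathbf{\Pi}$ and $\Theta_{e_k}$ are defined only on $\Lambda=V_0$ (diagonally on the $\widetilde H_\mu$), so ``conjugate the whole identity by $\mathbf{\Pi}$'' is not yet meaningful: before you can push anything past $d_+\colon V_k\to V_{k+1}$ you must \emph{extend} $\Theta_{e_k}\nabla$ to all of $V_\ast$, and there is no evident extension of $\mathbf{\Pi}$ itself to $V_k$ for $k>0$. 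The paper's way around this is the identity $\Theta(u)\nabla=\tau_u^*\nabla\tau_u^*$ (Proposition~\ref{prop:theta nabla}), proved via the Garsia--Mellit five-term relation; this rewrites $\Theta_{e_k}\nabla$ purely in terms of the multiplication operators $\e_i^*$ and $\nabla\bar\omega$, both of which are already known to extend to $V_\ast$ (as $\tau_u^*$ and $\cN$) with explicit commutation relations against $d_\pm$, $y_1$, $z_1$. All of your ``intertwining lemma'' then falls out by composing known relations (Propositions~\ref{prop:operators on Aqt} and~\ref{prop:commutation M}); no Pieri-rule computation with $\Pi_\mu$ eigenvalues is ever performed, and I see no evidence that such a computation is tractable --- the interaction of $\mathbf{\Pi}$ with the shift $X\mapsto X+(q-1)y$ is precisely the hard thing the five-term relation lets one avoid.

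Second, even granting an extension, the correction terms in the recursion do not have the shape you predict. The extra term in \eqref{eq:recursion_1} is $\frac{1}{q-1}[d_-,d_+]M_{\alpha(1)}^{\ast k-1}$ --- note the composition is $\alpha(1)$ (a $1$ appended at the \emph{end}), not $(1)\alpha$ --- and the extra term in \eqref{eq:recursion_a} sums over all $\beta\vDash a$ with $M_{\alpha\beta}^{\ast k-1}$. A commutator identity of the form ``$\Theta_{e_k}d_+ = d_+\Theta_{e_k}+(\text{correction})\,\Theta_{e_{k-1}}$'' applied to a fixed $\alpha$ cannot by itself produce these reshuffled compositions. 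In the paper this bookkeeping is handled not by an intertwining of $\Theta$ with $d_+$ alone, but by (i) packaging all $k$ at once into the generating series $M_\alpha(u)=q^{\ell-|\alpha|}\cM(y_\alpha)$, (ii) the relations $\cM y_1=(1-uy_1)^{-1}z_1$ and $d_+\cM=\cM(1-qtuy_1)^{-1}d_+^*$, whose geometric-series denominators are what couple the coefficient of $u^k$ to that of $u^{k-1}$, and (iii) the Carlsson--Mellit identity \eqref{eq:y recursion} for $y_{(a)\alpha}$ together with $y_{(a+1)\alpha}=y_1y_{(a)\alpha}$, which is where the sums over $\beta$ and the shifted compositions come from. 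Without identifying the identity $\Theta(u)\nabla=\tau_u^*\nabla\tau_u^*$ (or an equivalent), your outline does not yet contain a viable path to the key lemma, so the proposal as it stands is a plausible framing of the problem rather than a proof.
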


The rest of this work is devoted to a proof of this theorem.

\section{Proof of Theorem \eqref{thm:operatDelta}} \label{sec:theproof}

We begin by proving the following identity.
\begin{remark}
Recently Conjecture~10.3 in \cite{dadderio2019theta}, which is another identity with Theta operators, has been proved  in \cite{romero2020proof} by a similar method. In fact Proposition~\ref{prop:commutation M} below can be used to obtain another proof of that conjecture.
\end{remark}

In writing identities it is convenient to use generating functions. Let $u, v$ be formal variables. Let 
\[
\Theta(u) \coloneqq \sum_{k=0}^\infty (-u)^k \Theta_{e_k},\quad \Delta(u) \coloneqq \sum_{k=0}^\infty (-u)^k \Delta_{e_k}, \quad \text{and}\quad 
\tau^*_u \coloneqq \sum_{k=0}^\infty (-u)^k \e_k^*,
\]
where the operator $\underline{e}_k^*$ is simply defined by $\e_k^*(f)\coloneqq e_k^*f$ for every symmetric function $f$.
\begin{proposition}\label{prop:theta nabla}
	We have the following identity:
	\begin{equation}\label{eq:theta nabla}
	\Theta(u) \nabla = \tau^*_u \nabla \tau^*_u.
	\end{equation}
	Alternatively, we  have
	\[
	\Theta_{e_k} \nabla = \sum_{i=0}^k \e_i^* \nabla \e_{k-i}^*.
	\]
\end{proposition}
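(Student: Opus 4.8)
The plan is to pass to generating functions and exploit the explicit definition of $\Theta$. The definition of $\Theta_f$ gives, on $\bigoplus_{n\ge 1}\Lambda^{(n)}$, the factorisation $\Theta(u)=\mathbf{\Pi}\,\tau^*_u\,\mathbf{\Pi}^{-1}$ (just sum the identity $\Theta_{e_k}F=\mathbf{\Pi}\,e_k^*\,\mathbf{\Pi}^{-1}F$ against $(-u)^k$, where $e_k^*$ denotes multiplication by $e_k[X/M]$). Since $\nabla$ and $\mathbf{\Pi}$ are both diagonal in the basis $\{\widetilde H_\mu\}$ they commute, so on positive degree $\Theta(u)\nabla=\mathbf{\Pi}\,\tau^*_u\,\nabla\,\mathbf{\Pi}^{-1}$, and rearranging, \eqref{eq:theta nabla} becomes equivalent to the single identity
\[
\Theta(u)=\tau^*_u\,\big(\nabla\,\tau^*_u\,\nabla^{-1}\big)\qquad\text{on }\bigoplus_{n\ge 1}\Lambda^{(n)},
\]
together with the degree-zero case, which I would treat separately.

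For the degree-zero case: on constants $\Theta(u)\nabla$ is the identity, while $\tau^*_u\nabla\tau^*_u$ is multiplication by $\tau^*_u\big(\nabla(\tau^*_u\!\cdot\!1)\big)$, so the claim is $\nabla(\tau^*_u\!\cdot\!1)=(\tau^*_u)^{-1}\!\cdot\!1$, i.e.
\[
\nabla\big(e_n[X/M]\big)=(-1)^n\,h_n[X/M]\qquad(n\ge 0).
\]
This is classical and follows immediately by expanding $e_n[X/M]$ and $h_n[X/M]$ in the modified Macdonald basis (Cauchy-type identities) and applying $\nabla\widetilde H_\mu=(-1)^{|\mu|}T_\mu\widetilde H_\mu$ termwise: the $\widetilde H_\mu$-coefficients of these two expansions differ precisely by the factor $(-1)^nT_\mu$ supplied by $\nabla$.

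For the positive-degree case everything hinges on computing the conjugate $\nabla\,\tau^*_u\,\nabla^{-1}$, equivalently on controlling $\nabla\big(e_k[X/M]\cdot F\big)$. I would obtain this from the standard Macdonald operators $\mathbf D(z)$ of Garsia--Haiman--Tesler, $\mathbf D(z)F[X]=F\big[X+\tfrac Mz\big]\sum_{i\ge 0}(-z)^ie_i[X]$, whose action on $\widetilde H_\mu$ is explicit and whose behaviour under conjugation by $\nabla$ is known in closed form: the multiplication operator $\tau^*_u$ and the plethystic translation $F[X]\mapsto F[X-u/M]$ are the ``multiplicative'' and ``shift'' halves of these operators, so extracting the appropriate coefficients produces the formula for $\nabla\,\tau^*_u\,\nabla^{-1}$; the identity $\Theta(u)=\tau^*_u(\nabla\tau^*_u\nabla^{-1})$ can then be read off on the basis $\{\widetilde H_\mu\}$ using $\mathbf{\Pi}\widetilde H_\mu=\Pi_\mu\widetilde H_\mu$ and $\nabla\widetilde H_\mu=(-1)^{|\mu|}T_\mu\widetilde H_\mu$. (One could also try an induction on the degree $n$: $e_1^\perp=\partial_{p_1}$ is a derivation, so $[e_1^\perp,\tau^*_u]=-\tfrac uM\,\tau^*_u$, and one would need to combine this with the commutators of $e_1^\perp$ with $\nabla$ and $\mathbf{\Pi}$, which require the Pieri rule for modified Macdonald polynomials, the cases $n=0,1$ being checked by hand.)

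The main obstacle is exactly this last step: $\nabla$ is very far from being multiplicative, so $\nabla\big(e_k[X/M]\cdot F\big)$ is not related to $\nabla F$ in any naive way, and pinning down $\nabla\,\tau^*_u\,\nabla^{-1}$ requires genuine Macdonald-operator input. (Even the degree-zero case already shows such input is unavoidable, though there it collapses to two textbook Cauchy-type identities.)
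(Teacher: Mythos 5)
Your reduction of the positive-degree case to the single identity $\Theta(u)=\tau^*_u\,\nabla\tau^*_u\nabla^{-1}$ is correct (it uses $\Theta(u)=\PPi\tau^*_u\PPi^{-1}$ and the fact that $\nabla$ and $\PPi$ are both diagonal on $\{\widetilde H_\mu\}$, hence commute), and your degree-zero verification via $\nabla\bigl(e_n[X/M]\bigr)=(-1)^n h_n[X/M]$ is sound. But the heart of the proposition is exactly the step you defer to ``genuine Macdonald-operator input'': a closed form for $\nabla\tau^*_u\nabla^{-1}$, equivalently for how multiplication by $e_k[X/M]$ commutes past $\nabla$. This cannot be ``read off on the basis $\{\widetilde H_\mu\}$'': $\tau^*_u$ is a multiplication operator, far from diagonal in that basis, and the Pieri-type coefficients it involves are precisely the hard content. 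As you acknowledge in your final paragraph, this step is left unproved, so the argument has a genuine gap at its core rather than being a complete proof by a different route.

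The paper fills this gap by importing the five-term relation of \cite{Garsia-Mellit-FiveTerm-2019}, $T_{1,0}T_{0,1}=T_{0,1}T_{1,1}T_{1,0}$, which after cancelling a scalar series becomes $\Delta(u)\tau^*_v=\tau^*_v\bigl(\sum_k(uv)^kR_{k,k}\bigr)\Delta(u)$. Two further steps are then needed, neither of which appears in your sketch: first, the unknown operators $R_{k,k}$ are identified as $(-1)^k\nabla\e_k^*\nabla^{-1}$ by extracting the coefficient of $u^mv^k$, applying it to $f$ of degree $d$, and specializing to $m=k+d$, where all but one term on the right vanishes and $\Delta_{e_m}$ degenerates to $(-1)^{k+d}\nabla$; second, the resulting identity $\Delta(u)\tau^*_v=\tau^*_v\nabla\tau^*_{uv}\nabla^{-1}\Delta(u)$ is converted into a statement about $\PPi$ (hence about $\Theta$) by dividing by $1-u$ and setting $u=1$, using $\Delta(u)\widetilde H_\mu=\prod_{c\in\mu}(1-uq^{a'_\mu(c)}t^{l'_\mu(c)})\widetilde H_\mu$. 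This passage from $\Delta(u)$ to $\PPi$ is entirely absent from your plan. To complete your argument you should either invoke the five-term relation explicitly and carry out these two steps, or supply an independent derivation of $\nabla\tau^*_u\nabla^{-1}$ from the $\mathbf{D}(z)$ operators; the latter amounts to reproving the Garsia--Mellit identity.
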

\begin{proof}
	We quote the identity from \cite[Theorem 1.1, specialized to (4) and (2)] {Garsia-Mellit-FiveTerm-2019}:
	\begin{equation}\label{eq:five term}
	T_{1,0} T_{0,1} = T_{0,1} T_{1,1} T_{1,0}, \qquad 
	\end{equation}
	where 
	\[
	T_{1,0} = \sum_{k=0}^\infty (-u)^k \Delta_{e_k[-1/M+X]},\quad T_{0,1}=\tau^*_v,\quad T_{1,1}=\sum_{k=0}^\infty (uv)^k R_{k,k}
	\]
	for certain operators $R_{k,k}$. The series $T_{1,0}$ satisfies
	\[
	T_{1,0} = \sum_{k=0}^\infty (-u)^k e_k[-1/M] \cdot \Delta(u),
	\]
	and cancelling the series $(-u)^k e_k[-1/M]$ from both sides of \eqref{eq:five term} produces
	\begin{equation}\label{eq:gen series}
	\Delta(u) \tau^*_v = \tau^*_v \left(\sum_{k=0}^\infty (uv)^k R_{k,k}\right) \Delta(u).
	\end{equation}
	Suppose $f$ is a symmetric function of degree $d$. Collecting the coefficients of $u^m v^k$ in \eqref{eq:gen series} produces
	\[
	\Delta_{e_m} (e_k^* f) = \sum_{i=0}^{\min(m,k)} \e_{k-i}^* R_{i,i} \Delta_{e_{m-i}} (f).
	\]
	Let us specialize to $m=k+d$. Since the degree of $e_k^* f$ is $m$, the $\Delta$ operator on the left hand side can be replaced by $(-1)^{k+d} \nabla$. The terms on the right hand side vanish when $m-i>d$, which is equivalent to $i<k$. So only the term for $i=k$ on the right hand side survives and we obtain
	\[
	(-1)^{k+d} \nabla(e_k^* f) = R_{k,k} (-1)^d \nabla (f).
	\]
	So we have $R_{k,k} = (-1)^k \nabla \e_k^* \nabla^{-1}$ and  \eqref{eq:gen series} can be written as follows:
	\begin{equation}\label{eq:gen series 2}
	\Delta(u) \tau_v^* = \tau_v^* \nabla \tau_{uv}^* \nabla^{-1} \Delta(u).
	\end{equation}
	If we apply both sides to a function of positive degree, the result is divisible by $1-u$. In fact we have 
	\[
	\Delta(u) \tilde H_\mu = \prod_{c\in\mu} (1-u q^{a'_\mu(c)} t^{l'_\mu(c)}) \tilde H_\mu.
	\]
	 Diving by $1-u$ and specializing to $u=1$ produces the operator $\PPi$, so \eqref{eq:gen series 2} implies the following identity for $\PPi$:
	\[
	\PPi \tau_v^* (f) = \tau_v^* \nabla \tau_v^* \nabla^{-1} \PPi (f) \qquad (\deg f>0).
	\]
	From the definition of $\Theta(v)$ we have $\Theta(v)=\PPi \tau_v^* \PPi^{-1}$ when applied to functions of positive degree, so in this case the statement is equivalent to \eqref{eq:theta nabla}. On the other hand, applying \eqref{eq:gen series 2} to the function $1$ produces
	\[
	\Delta(u) (\tau_v^* (1)) = \tau_v^* \nabla \tau_{uv}^* (1).
	\]
	Setting $u=1$ in this identity produces $1 = \tau_v^* \nabla \tau_{v}^* (1)$. So \eqref{eq:theta nabla} is also true when applied to a function of degree $0$.
\end{proof}

The operation $\bar\omega$ on symmetric functions is defined by sending $F[X;q,t]$ to $F[-X;q^{-1},t^{-1}]$. The operators $\tau_u^*$ and $\nabla\bar\omega$ admit nice extensions to the space $V_*\coloneqq V_0\oplus V_1\oplus\cdots$, which is a module for the $\bA_{q,t}$-algebra (see \cite{Carlsson-Mellit-ShuffleConj-2015}).

Consider the algebra $\mathbb{A}^*=\mathbb{A}_{q^{-1}}$ with generators $d_{\pm}^*$, $T_i^*$, where $z_i$ denotes the image of the multiplication by $y_i$ under the isomorphism from $\mathbb{A}$ to $\mathbb{A}^*$ that sends generators to corresponding generators, and which is antilinear with respect to $q\mapsto q^{-1}$.

Recall (see \cite[Theorem~6.1]{Carlsson-Mellit-ShuffleConj-2015}) that $\mathbb{A}^*$ acts on $V_*$, and under this action
\[d_-^*=d_-,\quad T_i^*=T_i^{-1},\quad (d_+^*F)[X]=\gamma F[X+(q-1)y_{k+1}]\quad \text{for }F\in V_k, \]
where $\gamma$ is the operator that sends $y_i$ into $y_{i+1}$ for $i=1,2,\dots,k$ and $y_{k+1}$ to $ty_k$. Moreover (cf.\ \cite[Lemma~5.4]{Carlsson-Mellit-ShuffleConj-2015}) on $V_k$
\[ z_1= \frac{q^{k-1}}{q^{-1}-1}(d_+^*d_- -d_-d_+^*)T_{k-1}^{-1}\cdots T_1^{-1}. \]
\begin{proposition}\label{prop:operators on Aqt}
	The operator $\nabla\bar\omega$ extends to an antilinear operator $\cN$ on $V_\ast$ in such a way that we have
	\[
	\cN d_+ = d_+^* \cN, \quad \cN d_- = d_- \cN,\quad \cN T_i = T_i^{-1}\cN, \quad \cN y_i = z_i \cN, \quad \cN z_i = y_i \cN.
	\]
	The operator $\tau_u^*$ extends to a linear endomorphism of $V_k$ in such a way that it commutes with $d_-, T_i, d_+, y_i$ and we have
	\[
	d_+^* \tau_u^* = (1-u y_1) \tau_u^* d_+^*,\quad z_1 \tau_u^* = (1-u y_1) \tau_u^* z_1.
	\]
\end{proposition}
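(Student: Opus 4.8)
My plan is to build $\cN$ and $\tau_u^*$ by extending them from $V_0=\Lambda$ and then to verify the listed relations; the one genuine difficulty will be the compatibility of $\cN$ with $d_-$. Everything rests on the structural identity
\[
V_k=\bigoplus_{j\ge 0}y_k^{\,j}\,d_+(V_{k-1}),\qquad V_0=\Lambda .
\]
Since each $\Upsilon_{y_iy_{i+1}}=T_i$ is invertible and preserves the total degree (degree in $X$ plus degree in the $y$'s), and $F\mapsto F[X+(q-1)y_k]$ is injective and degree preserving, the map $d_+\colon V_{k-1}\to V_k$ is injective and degree preserving, so comparing dimensions in each total degree (a hockey-stick identity for partition numbers) forces the displayed decomposition. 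Consequently $V_*$ is generated, over the algebra of operators generated by $d_+$ and the multiplications $y_1,y_2,\dots$, by $V_0$. This has two consequences I will use: an operator on $V_*$ commuting with all the $y_i$ and with $d_+$ is determined by its restriction to $V_0$; and one may \emph{define} an operator by prescribing it on $V_0$ together with its interaction with $y_k$ and $d_+$, the recipe being automatically consistent because the sum above is direct.

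For $\tau_u^*$ I would use the closed form. Set $A_u[Z]\coloneqq\sum_{m\ge 0}(-u)^m e_m[Z/M]$, so $A_u[Z+W]=A_u[Z]A_u[W]$ and $A_u[(q-1)y]=\prod_{j\ge 0}(1-ut^jy)^{-1}$ for one variable $y$, and let $\tau_u^*$ act on $V_k$ as multiplication by $A_u^{(k)}\coloneqq A_u[X+(q-1)(y_1+\cdots+y_k)]$. On $V_0$ this is multiplication by $\sum_m(-u)^m e_m^*$, i.e.\ the operator of the statement, and along the decomposition it is the unique extension commuting with $d_+$ and the $y_i$. Now $A_u^{(k)}$ is symmetric in $y_1,\dots,y_k$, so it commutes with every $T_i$ (and trivially with multiplication by each $y_i$); the alphabet identities $A_u^{(k)}|_{X\mapsto X+(q-1)y_{k+1}}=A_u^{(k+1)}$ and $A_u^{(k)}|_{X\mapsto X-(q-1)y_k}=A_u^{(k-1)}$ give commutation with $d_+$ and with $d_-$ (in the $d_-$ formula the $y_k$-dependence of $A_u^{(k)}$ is cancelled by the shift $X\mapsto X-(q-1)y_k$, after which the factor no longer involves $y_k$ and passes through the extraction of the coefficient of $y_k^{-1}$). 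For the relation with $d_+^*$ one substitutes the definitions of $d_+^*$ and $\gamma$, uses multiplicativity of $A_u$ to cancel the common part of the alphabet, and identifies the leftover scalar via the one-variable evaluation as exactly $1-uy_1$. Finally $z_1\tau_u^*=(1-uy_1)\tau_u^*z_1$ follows by plugging in the quoted formula $z_1=\frac{q^{k-1}}{q^{-1}-1}(d_+^*d_--d_-d_+^*)T_{k-1}^{-1}\cdots T_1^{-1}$ and pushing $\tau_u^*$ through, using its commutation with the $T_i$ and with $d_-$, the $d_+^*$-relation just established, and the fact that $d_-$ (which deletes the top $y$-variable) commutes with multiplication by $y_1$.

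For $\cN$ I would set $\cN=\nabla\bar\omega$ on $V_0$ and extend by $\cN(y_k^{\,j}d_+F)\coloneqq z_k^{\,j}\,d_+^*\,\cN(F)$; this is well defined by the direct-sum decomposition and antilinear because $\nabla\bar\omega$ is antilinear while $d_+,d_+^*,z_k$ are $\mathbb{Q}(q,t)$-linear. By construction $\cN d_+=d_+^*\cN$ and $\cN y_k=z_k\cN$. The remaining identities all reduce to $\cN d_-=d_-\cN$ and $\cN T_i=T_i^{-1}\cN$: granting these, $\cN$ intertwines the full action of $\mathbb{A}$ on $V_*$ with that of $\mathbb{A}^*$ (the antilinear isomorphism $\mathbb{A}\to\mathbb{A}^*$ matches the defining relations after $q\mapsto q^{-1}$), so $\cN y_i=z_i\cN$ for every $i$ follows from the expression of $y_i$ in terms of $d_\pm$ and the $T_j$ (of the type of the quoted formula for $z_1$), and $\cN z_i=y_i\cN$ follows from $\cN^2=\mathrm{id}$ — which holds on $V_0$ because $\bar\omega$ scales each $\widetilde{H}_\mu$ by a scalar (a consequence of the $q,t$-inversion symmetry of the modified Macdonald polynomials), whence $\bar\omega\nabla\bar\omega=\nabla^{-1}$ there, and then on all of $V_*$ by the generation property.

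The substantial step, and the one I expect to be the main obstacle, is $\cN d_-=d_-\cN$ (the proof of $\cN T_i=T_i^{-1}\cN$ has the same shape but is easier, involving no coefficient extraction). I would argue by induction on $k$: write $g\in V_k$ as $\sum_j y_k^{\,j}d_+F_j$ and expand each $d_-(y_k^{\,j}d_+F_j)$ using the relations of $\mathbb{A}$ that move $d_-$ past $y_k$ and past $d_+$, obtaining a $\mathbb{Q}(q,t)$-combination of $d_-F_j$, $F_j$, and lower-$y$-degree elements of $V_{k-1}$; then apply $\cN$, using antilinearity, the induction hypothesis on $V_{k-1}$, and the matching of each $\mathbb{A}$-relation with its $\mathbb{A}^*$-counterpart under $q\mapsto q^{-1}$, and recognise the outcome as the $\mathbb{A}^*$-expansion of $d_-(z_k^{\,j}d_+^*\cN F_j)=d_-\cN(g)$. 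The obstacle is entirely in organising this reduction, together with the low-rank base cases; everything else is either a direct plethystic computation or a formal consequence of the freeness of $V_*$ above $V_0$.
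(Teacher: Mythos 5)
Your treatment of $\tau_u^*$ is essentially the paper's: the proof there simply writes down the same closed formula, multiplication on $V_k$ by $\sum_{n\ge 0}(-u)^n e_n\bigl[(X+(q-1)\sum_{i=1}^k y_i)/M\bigr]$, and declares the verifications straightforward; your plethystic manipulations (multiplicativity of the alphabet series, cancellation of the $y_k$-dependence before extracting the coefficient of $y_k^{-1}$ in $d_-$, the one-variable evaluation producing the factor $1-uy_1$, and deducing the $z_1$-relation from the quoted expression for $z_1$) are exactly the intended routine checks.

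The $\cN$ half, however, has a genuine gap. The paper does not construct $\cN$ at all: it cites \cite{Carlsson-Mellit-ShuffleConj-2015}*{Theorem~7.4}, and the entire difficulty of that theorem is precisely the step you defer --- showing that the operator defined on $V_0$ by $\nabla\bar\omega$ and propagated upward through $d_+$ and the $y_i$ satisfies $\cN d_-=d_-\cN$ and $\cN T_i=T_i^{-1}\cN$. Your proposal for this step is a plan (``I would argue by induction on $k$\dots the obstacle is entirely in organising this reduction''), not an argument, and the plan as stated does not go through on its own terms: $d_-\bigl(y_k^{\,j}d_+F\bigr)$ is not reducible to $d_-F$ and $F$ by the relations of $\mathbb{A}$ alone, since $y_k d_-$ is not even a composable expression and the actual relations tying $d_-$, $d_+$ and the $y_i$ together (e.g.\ $d_-y_k$ versus $y_1 d_-$ conjugated by $T$'s, and $[d_-,d_+]$ expressed through $z_1$) already presuppose the $z$-operators whose compatibility with $\cN$ you are trying to establish. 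In Carlsson--Mellit this is resolved by a substantive computation with the explicit plethystic action, not by formal bookkeeping. Two further points: (i) your derivation of $\cN z_i=y_i\cN$ from $\cN^2=\mathrm{id}$, with $\cN^2=\mathrm{id}$ itself ``extended from $V_0$ by the generation property,'' is circular --- to evaluate $\cN^2$ on $y_k^{\,j}d_+F$ you must already know how $\cN$ moves past $z_k$ and $d_+^*$; (ii) the decomposition $V_k=\bigoplus_j y_k^{\,j}d_+(V_{k-1})$ is true, but your dimension count gives equality of graded dimensions only, which by itself establishes neither spanning nor directness; one of the two needs an independent (triangularity) argument. The clean fix is the paper's: invoke \cite{Carlsson-Mellit-ShuffleConj-2015}*{Theorem~7.4} for the existence of $\cN$ and all five commutation relations, rather than reproving it.
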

\begin{proof}
	The statement about $\nabla$ is \cite[Theorem 7.4]{Carlsson-Mellit-ShuffleConj-2015}. 
	The extension of $\tau_u^*$ is extracted from the proof of \cite[Proposition~3.13]{mellit2016toric}. Define $\tau_u^*$ on $V_k$ by
	\[
	\tau_u^* = \sum_{n=0}^\infty (-u)^n \e_n\left[\frac{X+(q-1)\sum_{i=1}^k y_i}{M}\right].
	\]
	 It is then straightforward to verify the statements about $\tau_u^*$.
\end{proof}

Now we combine Propositions \ref{prop:operators on Aqt} and \ref{prop:theta nabla} to obtain an extension of $\Theta(u)\nabla\bar\omega$ to $V_\ast$.

\begin{proposition}\label{prop:commutation M}
	The operator $\Theta(u)\nabla\bar\omega$ extends to an antilinear operator $\cM$ on $V_\ast$ in such a way that it commutes with $d_-$, $\cM T_i=T_i^{-1}\cM$, and we have
	\begin{equation}\label{eq:commutation M 1}
	\cM d_+ = (1-u y_1)^{-1} d_+^* \cM,\quad \cM y_1 = (1-u y_1)^{-1} z_1,
	\end{equation}
	\begin{equation}\label{eq:commutation M 2}
	\cM d_+^* = \left(1-(qt)^{-1} u (1-u y_1)^{-1} z_1\right) d_+ \cM,\quad \cM z_1 = \left(1-(qt)^{-1} u (1-u y_1)^{-1} z_1\right) y_1 \cM,
	\end{equation}
	\begin{equation}\label{eq:commutation M 3}
		d_+ \cM = \cM (1 - q t u y_1)^{-1} d_+^*, \quad y_1 \cM = \cM (1 - q t u y_1)^{-1} z_1.
	\end{equation}
\end{proposition}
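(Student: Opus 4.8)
The plan is to realise $\cM$ as a conjugate of the extension $\cN$ of $\nabla\bar\omega$ by (the extensions of) the operators $\tau^*$, and then to verify each of the three relations by pushing these factors past the generators of $\mathbb A$ and $\mathbb A^*$ one at a time; the only inputs are Propositions~\ref{prop:theta nabla} and~\ref{prop:operators on Aqt} together with the generator relations recalled before the statement.

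First I would pin down the definition. By Proposition~\ref{prop:theta nabla} we have $\Theta(u)\nabla=\tau^*_u\nabla\tau^*_u$, so on $\Lambda=V_0$, using that $\bar\omega$ is an involution and $\cN|_{V_0}=\nabla\bar\omega$,
\[
\Theta(u)\nabla\bar\omega=\tau^*_u\,\nabla\,\tau^*_u\,\bar\omega=\tau^*_u\,(\nabla\bar\omega)\,(\bar\omega\tau^*_u\bar\omega)=\tau^*_u\,\cN\,(\bar\omega\tau^*_u\bar\omega).
\]
A short plethystic computation gives $\bar\omega(e_k[X/M])=(-qt)^k\,h_k[X/M]$ (the substitution $X\mapsto-X$ negates every power sum, while $M=(1-q)(1-t)\mapsto M/(qt)$); since $\bar\omega$ inverts $q,t$ but fixes the formal variable $u$, the operator $\bar\omega\tau^*_u\bar\omega$ is multiplication by $\sum_k(qtu)^kh_k^*$, which is precisely $(\tau^*_{qtu})^{-1}$. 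I would therefore set
\[
\cM\coloneqq\tau^*_u\,\cN\,(\tau^*_{qtu})^{-1}
\]
on $V_\ast$, using the extensions of $\cN$ and $\tau^*$ from Proposition~\ref{prop:operators on Aqt}, and noting that $\tau^*_{qtu}=1+O(u)$ is invertible over $\mathbb Q(q,t)[[u]]$. Then $\cM$ is antilinear, invertible, preserves each $V_k$, and by the display it restricts to $\Theta(u)\nabla\bar\omega$ on $V_0$.

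For \eqref{eq:commutation M 1} I would slide the factors of $\cM$ past $d_-,T_i,d_+,y_1$, using that $\tau^*_u$ and $(\tau^*_{qtu})^{-1}$ commute with $d_-,T_i,d_+,y_i$; that Proposition~\ref{prop:operators on Aqt} rearranges to $\tau^*_u d_+^*=(1-uy_1)^{-1}d_+^*\tau^*_u$ and $\tau^*_u z_1=(1-uy_1)^{-1}z_1\tau^*_u$; and the intertwinings $\cN d_+=d_+^*\cN$, $\cN y_1=z_1\cN$, $\cN d_-=d_-\cN$, $\cN T_i=T_i^{-1}\cN$. For instance
\[
\cM d_+=\tau^*_u\cN(\tau^*_{qtu})^{-1}d_+=\tau^*_u\cN d_+(\tau^*_{qtu})^{-1}=\tau^*_u d_+^*\cN(\tau^*_{qtu})^{-1}=(1-uy_1)^{-1}d_+^*\cM,
\]
the computation for $\cM y_1$ being word-for-word the same with $(d_+,d_+^*)$ replaced by $(y_1,z_1)$, and $\cM d_-=d_-\cM$, $\cM T_i=T_i^{-1}\cM$ being immediate. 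Relation \eqref{eq:commutation M 3} is the mirror statement on the $\mathbb A^*$-side: now I push the factors of $\cM$ past $d_+^*$ and $z_1$ so that $\cM$ lands on the left of the right-hand side, using $d_+^*(\tau^*_{qtu})^{-1}=(1-qtuy_1)^{-1}(\tau^*_{qtu})^{-1}d_+^*$ and $z_1(\tau^*_{qtu})^{-1}=(1-qtuy_1)^{-1}(\tau^*_{qtu})^{-1}z_1$ (Proposition~\ref{prop:operators on Aqt} with $u\mapsto qtu$, rearranged), the commutation of $\tau^*_u$ with $d_+,y_i$, and the reverse intertwinings $\cN d_+^*=d_+\cN$, $\cN z_1=y_1\cN$ (the second is in Proposition~\ref{prop:operators on Aqt}, the first is its dual under the isomorphism $\mathbb A\cong\mathbb A^*$ carried by $\cN$). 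For instance, since $(\tau^*_{qtu})^{-1}$ commutes with $y_1$,
\[
d_+\cM=\tau^*_u d_+\cN(\tau^*_{qtu})^{-1}=\tau^*_u\cN d_+^*(\tau^*_{qtu})^{-1}=\tau^*_u\cN(\tau^*_{qtu})^{-1}(1-qtuy_1)^{-1}d_+^*=\cM(1-qtuy_1)^{-1}d_+^*,
\]
and $y_1\cM=\cM(1-qtuy_1)^{-1}z_1$ likewise.

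Finally \eqref{eq:commutation M 2} drops out formally. Since $\cM$ is antilinear and invertible, conjugation by $\cM$ fixes $u$ and inverts $q,t$; combined with $\cM y_1\cM^{-1}=(1-uy_1)^{-1}z_1$ (from \eqref{eq:commutation M 1}) this gives $\cM(1-qtuy_1)^{-1}\cM^{-1}=\bigl(1-(qt)^{-1}u(1-uy_1)^{-1}z_1\bigr)^{-1}$. Rewriting the first equation of \eqref{eq:commutation M 3} as $\cM^{-1}d_+\cM=(1-qtuy_1)^{-1}d_+^*$, applying $\cM(-)\cM^{-1}$, and substituting the previous identity then produces $\cM d_+^*=\bigl(1-(qt)^{-1}u(1-uy_1)^{-1}z_1\bigr)d_+\cM$, and the companion relation for $z_1$ follows identically from the second equation of \eqref{eq:commutation M 3}. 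I expect the only real difficulty to be bookkeeping: pinning down the $qt$-twist so that the inner $\tau^*$ is $(\tau^*_{qtu})^{-1}$, because every downstream factor ($(1-uy_1)^{-1}$ versus $(1-qtuy_1)^{-1}$, and the $(qt)^{-1}$ in \eqref{eq:commutation M 2}) is forced by the way $\bar\omega$ and $\cM$-conjugation move $q,t,u$; and making sure the reverse relation $\cN d_+^*=d_+\cN$ is genuinely at hand, rather than only the relations literally listed in Proposition~\ref{prop:operators on Aqt}.
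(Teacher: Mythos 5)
Your proposal is correct and follows exactly the paper's route: the paper's entire proof consists of the identity $\Theta(u)\nabla\bar\omega=\tau_u^*\nabla\tau_u^*\bar\omega=\tau_u^*\cN(\tau_{qtu}^*)^{-1}$ followed by "the statements are obtained by successively applying the commutation relations of Proposition~\ref{prop:operators on Aqt}," which is precisely the computation you carry out (your plethystic justification of $\bar\omega\tau_u^*\bar\omega=(\tau_{qtu}^*)^{-1}$ and your derivation of \eqref{eq:commutation M 2} from \eqref{eq:commutation M 3} by $\cM$-conjugation just make explicit what the paper leaves implicit). Your flagged caveat about needing $\cN d_+^*=d_+\cN$ is well taken but harmless, since the extension $\cN$ of \cite[Theorem~7.4]{Carlsson-Mellit-ShuffleConj-2015} is an involution, from which that relation follows from $\cN d_+=d_+^*\cN$.
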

\begin{proof}
	It is convenient to write $\Theta(u)\nabla\bar\omega$ as follows:
	\[
	\Theta(u)\nabla\bar\omega = \tau_u^* \nabla \tau_u^* \bar\omega = \tau_u^* \nabla  \bar\omega \left(\tau_{qt u}^*\right)^{-1} = \tau_u^* \cN \left(\tau_{qt u}^*\right)^{-1}.
	\]
	Then the statements are obtained by successively applying the commutation relations of Proposition \ref{prop:operators on Aqt}.
\end{proof}

For any composition $\alpha$ with $\alpha=(\alpha_1,\ldots,\alpha_\ell)$, $\sum_{i=1}^\ell \alpha_i=|\alpha|$ we have
\[
(-1)^{|\alpha|}C_\alpha = q^{\ell-|\alpha|} \bar\omega\left(d_-^\ell y_1^{\alpha_1-1} \cdots y_r^{\alpha_\ell-1} d_+^\ell (1)\right).
\]
Denote 
\[
y_\alpha = y_1^{\alpha_1-1} \cdots y_r^{\alpha_\ell-1} d_+^\ell (1) \in V_\ell.
\]
Define for any composition $\alpha$
\[
M_\alpha(u) = \sum_{k=0}^\infty (-u)^k M_\alpha^k :=  q^{\ell-|\alpha|} \cM(y_\alpha).
\]
\begin{theorem}
	We have $M_\alpha^k=M_\alpha^{* k}$ for all compositions $\alpha$ and all $k$. In particular, we have
	\[
	d_-^{\ell(\alpha)} M_\alpha^{* k} =(-1)^{|\alpha|} \Theta_{e_k} \nabla C_\alpha.
	\]
\end{theorem}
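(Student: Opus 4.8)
The plan is to verify that the family $\{M_\alpha(u)\}$ defined via the operator $\cM$ satisfies exactly the same recursion and initial conditions as the family $\{M_\alpha^{\ast k}(u)\}$ from Theorem~\ref{thm: comp-recursion}; since that recursion determines the family uniquely, the two families coincide coefficient by coefficient in $u$, which gives $M_\alpha^k = M_\alpha^{\ast k}$. The last displayed identity of the theorem then follows from the formula $(-1)^{|\alpha|}C_\alpha = q^{\ell-|\alpha|}\bar\omega(d_-^\ell y_\alpha)$ together with the fact that $\cM$ extends $\Theta(u)\nabla\bar\omega$ and commutes with $d_-$: applying $d_-^\ell$ to $M_\alpha(u) = q^{\ell-|\alpha|}\cM(y_\alpha)$ and using $\cM d_- = d_- \cM$ (from Proposition~\ref{prop:commutation M}) together with $\bar\omega^2 = \mathrm{id}$ and $\nabla\bar\omega\bar\omega = \nabla$ converts $d_-^\ell q^{\ell-|\alpha|}\cM(y_\alpha)$ into $\Theta(u)\nabla\bar\omega\,\bar\omega\big(q^{\ell-|\alpha|}d_-^\ell y_\alpha\big) = \Theta(u)\nabla\big((-1)^{|\alpha|}C_\alpha\big)$, and extracting the coefficient of $(-u)^k$ gives the claim.

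The substance is the recursion check. First I would record the two initial-condition facts: $M_\varnothing(u) = \cM(1) = \Theta(u)\nabla\bar\omega(1) = \Theta(u)(1)$, and since $\Theta_{e_k}$ kills constants for $k\ge 1$ while $\Theta_{e_0} = \mathrm{id}$ on degree $0$, this equals $1 = \delta_{k,0}$ at the level of $u$-coefficients, matching $M_\varnothing^{\ast k} = \delta_{k,0}$. Next, for the case $a=1$: by definition $y_{(1)\alpha} = d_+ y_\alpha$, so $q$-powers aside, $M_{(1)\alpha}(u) = \cM d_+ y_\alpha$, and I would plug in the first relation of \eqref{eq:commutation M 1}, $\cM d_+ = (1-uy_1)^{-1} d_+^* \cM$. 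Expanding $(1-uy_1)^{-1} = \sum_{j\ge 0}(uy_1)^j$ and using the commutation of $z_1$, $d_+^*$, $d_+$, $d_-$ with $\cM$ from Propositions~\ref{prop:operators on Aqt} and~\ref{prop:commutation M} — in particular rewriting $d_+^*$ back in terms of $d_+$ via \eqref{eq:commutation M 2}, and converting factors of $z_1$ acting through $\cM$ into $y_1$ acting before $\cM$ — I would match the result against $d_+ M_\alpha^k + \frac{1}{q-1}[d_-,d_+]M_{\alpha(1)}^{k-1}$. Here the identity $z_1 = \frac{q^{k-1}}{q^{-1}-1}(d_+^* d_- - d_- d_+^*)T_{k-1}^{-1}\cdots T_1^{-1}$ on $V_k$, recalled in the excerpt, is what turns the $z_1$-factor into a commutator $[d_-,d_+^*]$, and $\cM$'s intertwining of $d_+$ with $d_+^*$ and of $T_i$ with $T_i^{-1}$ converts this into the $[d_-,d_+]$ that appears in \eqref{eq:recursion_1}. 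For the case $a>1$, I would similarly start from $y_{(a)\alpha} = y_1^{a-1} d_+ y_\alpha$ and apply $\cM y_1 = (1-uy_1)^{-1}z_1$ repeatedly ($a-1$ times), then $\cM d_+$, geometrically expand, and use the $z_1$-to-commutator substitution plus the intertwining relations to recognize the two sums over $\beta\vDash a-1$ and $\beta\vDash a$ in \eqref{eq:recursion_a}; the factor $t^{a-1}$ and the $(q-1)^{-1}$ should emerge from the scalars in $\cM y_1$, in $z_1$'s commutator formula, and from the $q^{\ell-|\alpha|}$ normalization bookkeeping as the length $\ell$ changes.

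The main obstacle will be this last bookkeeping: carefully tracking how the normalizing power $q^{\ell-|\alpha|}$, the $t$-shift hidden in $d_+^*$ (the operator $\gamma$ sends $y_{k+1}\mapsto ty_k$), and the $u$-dependent scalars $(1-uy_1)^{-1}$ and $(1-(qt)^{-1}u(1-uy_1)^{-1}z_1)$ interact when $y_1$-powers are pushed through $\cM$ and the length of the composition changes by one at each step, and confirming that after regrouping the geometric series one obtains precisely the finite sums over $\beta\vDash a-1$ and $\beta\vDash a$ with the correct powers of $d_-$, rather than some resummed but differently-organized expression. In other words, the conceptual ingredients (the five-term relation packaged as Proposition~\ref{prop:theta nabla}, the Carlsson--Mellit action, and the $z_1$-commutator formula) are all in place; the work is an honest but lengthy manipulation of generating functions in $u$, carried out inside the $V_\ast$-module, to show both sides satisfy the same recursion. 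One should also double-check that every operator identity used is valid in the relevant degree/level (e.g.\ the $z_1$-formula is an identity on $V_k$ with $k\ge 1$, so the $k=0$ strata must be handled by the initial-condition argument rather than the recursion).
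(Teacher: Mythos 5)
Your overall strategy is the paper's: show that the $u$-coefficients of $M_\alpha(u)=q^{\ell-|\alpha|}\cM(y_\alpha)$ satisfy the recursion and initial conditions of Theorem~\ref{thm: comp-recursion}, then conclude by uniqueness; your treatment of the initial condition $M_\varnothing^{\ast k}=\delta_{k,0}$ and of the final displayed identity via $\cM d_-=d_-\cM$ is fine. But there is a genuine gap in the heart of the argument, namely the verification of \eqref{eq:recursion_a}. The sums over compositions $\beta\vDash a-1$ and $\beta\vDash a$ do not ``emerge'' from geometrically expanding $(1-uy_1)^{-1}$ together with the formula $z_1=\frac{q^{k-1}}{q^{-1}-1}(d_+^*d_--d_-d_+^*)T_{k-1}^{-1}\cdots T_1^{-1}$: that formula produces a single commutator, not a sum over compositions. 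The indispensable input is Carlsson--Mellit's identity \eqref{eq:y recursion} (their Proposition~6.6), which expresses $y_{(a)\alpha}$ as $\frac{t^{1-a}}{q-1}[d_+^*,d_-]$ applied to $\sum_{\beta\vDash a-1}q^{1-\ell(\beta)}d_-^{\ell(\beta)-1}y_{\alpha\beta}$. The paper starts from the \emph{right-hand side} of \eqref{eq:recursion_a}, pushes $[d_-,d_+]$ through $\cM$ via \eqref{eq:commutation M 3} to turn it into $(1-qtuy_1)^{-1}[d_-,d_+^*]$ inside $\cM$, and then collapses the composition sum using \eqref{eq:y recursion}. The leftover factor $(1-qtuy_1)^{-1}$ is not expanded geometrically either: it is cancelled by the telescoping combination $F_{a,\alpha}-uF_{a+1,\alpha}$ (the $u$ accounting for the shift $k\mapsto k-1$ in the second sum), using $y_{(a+1)\alpha}=y_1y_{(a)\alpha}$. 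Without \eqref{eq:y recursion} and this telescoping, the manipulation you describe cannot reach \eqref{eq:recursion_a}; this is a missing idea, not bookkeeping.

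Two smaller corrections. First, $y_{(1)\alpha}=d_+y_\alpha$ is not true by definition (and is not what is needed); the relevant fact is $y_{(1)\alpha}=d_+^*y_\alpha$, consistent with $\gamma$ shifting $y_i\mapsto y_{i+1}$, and the paper accordingly computes $d_+M_\alpha(u)=q^{\ell(\alpha)-|\alpha|}\cM\left((1-qtuy_1)^{-1}y_{(1)\alpha}\right)$ directly from \eqref{eq:commutation M 3}. Second, your plan to ``rewrite $d_+^*$ back in terms of $d_+$ via \eqref{eq:commutation M 2}'' after applying $\cM d_+=(1-uy_1)^{-1}d_+^*\cM$ goes the wrong way around: \eqref{eq:commutation M 2} converts $\cM d_+^*$ into $d_+\cM$, not $d_+^*\cM$ into $\cM d_+$. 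The clean route, and the one the paper takes throughout, is to work exclusively with the relations \eqref{eq:commutation M 3} in the direction ``operator on the left of $\cM$ becomes operator on the right of $\cM$,'' i.e.\ to simplify the right-hand sides of the recursions down to $\cM(y_{(a)\alpha})$ rather than expanding the left-hand sides.
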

\begin{proof}
In order to show that $M_\alpha^k=M_\alpha^{* k}$ it is sufficient to verify that the $u$-coefficients of $M_\alpha(u)$ satisfy the recursion relations \eqref{eq:recursion_1}, \eqref{eq:recursion_a} of Theorem~\ref{thm: comp-recursion}. The main idea for verifying these relations is to start with the right hand side and simplify it to obtain the left hand side instead of the other way around. 

We begin with relation \eqref{eq:recursion_a}.
Consider the expression
\[
F_{a, \alpha} := (d_- d_+ - d_+ d_-) \sum_{\beta\vDash a-1} d_-^{\ell(\beta)-1} (M_{\alpha\beta}(u)) \qquad(a\geq 2).
\]
Using \eqref{eq:commutation M 3} it can be written as\footnote{Keep in mind that $\cM$ is antilinear, so $\cM(qf)=q^{-1}\cM(f)$, $\cM(tf)=t^{-1}\cM(f)$ for any expression $f$.}
\begin{equation}\label{eq:intermediate1}
	F_{a, \alpha} = q^{\ell(\alpha)-|\alpha|} \sum_{\beta\vDash a-1}  \cM\left( q^{a-1-\ell(\beta)} (1-qt u y_1)^{-1} (d_- d_+^* - d_+^* d_-) d_-^{\ell(\beta)-1} y_{\alpha\beta}\right).
\end{equation}
The following identity has been established in \cite[Proposition~6.6]{Carlsson-Mellit-ShuffleConj-2015}:
\begin{equation}\label{eq:y recursion}
y_{(a) \alpha} = \frac{t^{1-a}}{q-1}(d_+^* d_- - d_- d_+^*) \sum_{\beta\vDash a-1} q^{1-l(\beta)} d_-^{l(\beta)-1} (y_{\alpha\beta}) \qquad(a\geq 2).
\end{equation}
So \eqref{eq:intermediate1} can be simplified to
\[
F_{a, \alpha} = -q^{\ell(\alpha)-|\alpha|} \cM\left( q^{a-2} (q-1) t^{a-1} (1-qtu y_1)^{-1} y_{(a+1) \alpha}\right).
\]
From $y_{(a+1)\alpha}=y_1 y_{(a)\alpha}$ we obtain
\[
F_{a,\alpha} - u F_{a+1,\alpha} = - q^{\ell(\alpha)-|\alpha|} \cM\left( q^{a-2} (q-1) t^{a-1} y_{(a) \alpha}\right) = (q-1)t^{1-a} M_{(a) \alpha}(u).
\]
This is equivalent to \eqref{eq:recursion_a}.

Now consider \eqref{eq:recursion_1}. Write the generating series for the second term on the right hand side:
\[
\frac{1}{q-1}[d_-,d_+] M_{\alpha(1)} (u) = \frac{q^{\ell(\alpha)-|\alpha|}}{q-1} \cM\left( (1-qtu y_1)^{-1} [d_-,d_+^*] y_{\alpha(1)}\right).
\]
Using \eqref{eq:y recursion} in the special case $a=2$ we obtain
\[
=-\frac{q^{\ell(\alpha)-|\alpha|}}{q-1} \cM\left( (1-qtu y_1)^{-1} (q-1) t y_{(2)\alpha}\right) = q^{\ell(\alpha)-|\alpha|} \cM\left(q t y_1 (1-qtu y_1)^{-1} y_{(1)\alpha}\right).
\]
On the other hand, the generating series for the first term on the right hand side of \eqref{eq:recursion_1} equals
\[
d_+ M_{\alpha}(u) = q^{\ell(\alpha)-|\alpha|} \cM\left( (1-qtu y_1)^{-1} y_{(1)\alpha} \right).
\]
Combining the two terms we obtain
\[
d_+ M_{\alpha}(u) - u \frac{1}{q-1}[d_-,d_+] M_{\alpha(1)} (u) = q^{\ell(\alpha)-|\alpha|} \cM\left(y_{(1)\alpha} \right) = M_{(1)\alpha}(u).
\]
This is equivalent to \eqref{eq:recursion_1}.
\end{proof}

% Bibliography

\bibliographystyle{amsalpha}
\bibliography{Biblebib}

\end{document}